\documentclass[12pt]{article}
\usepackage[centertags]{amsmath}
\usepackage{amsfonts}
\usepackage{amssymb}
\usepackage{latexsym}
\usepackage{amsthm}
\usepackage{newlfont}
\usepackage{graphicx}
\usepackage{listings}
\usepackage{booktabs}
\usepackage{abstract}
\lstset{numbers=none,language=MATLAB}

\bibliographystyle{amsplain}

\newlength{\defbaselineskip}
\setlength{\defbaselineskip}{\baselineskip}
\newcommand{\setlinespacing}[1]%
           {\setlength{\baselineskip}{#1 \defbaselineskip}}

\newcommand{\actaqed}{\hfill $\actabox$}
{\medskip\noindent \textit{Proof of #1. }}%
{\actaqed \medskip}

\def\D{{\mathcal D}}

\def\R{{\mathbb R}}
\def \<{\langle}
\def\>{\rangle}

\def \e{\epsilon}

\def \ff{\varphi}
\def\oo{\omega}

\def \rank{\operatorname{rank}}

\newtheorem{Theorem}{Theorem}[section]
\newtheorem{Lemma}{Lemma}[section]

\newtheorem{Proposition}{Proposition}[section]
\newtheorem{Remark}{Remark}[section]

\newtheorem{Corollary}{Corollary}[section]
\numberwithin{equation}{section}

\begin{document}
\title{{A remark on covering} }
\author{V.N. Temlyakov \thanks{ University of South Carolina. Research was supported by NSF grant DMS-1160841}} \maketitle
\begin{abstract}
{We discuss construction of coverings of the unit ball of a finite dimensional Banach space. The well known technique of comparing volumes gives upper and lower bounds on covering numbers. This technique does not provide a construction of good coverings.  Here we apply incoherent dictionaries for construction of good coverings. We use the following strategy. First, we build a good covering by balls with a radius close to one. Second, we iterate this construction to obtain a good covering for any radius. We mostly concentrate on the first step of this strategy. }
\end{abstract}

\section{Introduction}

Let  $X$ be a Banach space $\R^d$ with a norm $\|\cdot\|$ and let $B:=B_X$ denote the corresponding closed unit ball:
\begin{equation}\label{1.1}
B:=B_X:=\{x\in \R^d:\|x\|\le 1\}.
\end{equation}
The open unit ball will be denoted by $B^o:=B^o_X$:
\begin{equation}\label{1.2}
B^o:=B^o_X:=\{x\in \R^d:\|x\|< 1\}.
\end{equation}
Notation $B(x,r):=B_X(x,r)$ and $B^o(x,r):=B^o_X(x,r)$ will be used respectively for closed and open balls with the center $x$ and radius $r$. In case $r=1$ we drop it from the notation: $B^o(x):=B^o(x,1)$.
For a compact set $A$ and a positive number $\e$ we define the covering number $N_\e(A)$
 as follows
$$
N_\e(A) := N_\e(A,X) 
:=\min \{n : \exists x^1,\dots,x^n : A\subseteq \cup_{j=1}^n B_X(x^j,\e)\}.
$$
The following proposition is well known.
\begin{Proposition}\label{P1.1} For any $d$-dimensional Banach space $X$ we have
$$
\e^{-d} \le N_\e(B_X,X) \le (1+2/\e)^d.
$$
\end{Proposition}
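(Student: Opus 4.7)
The plan is to prove the two bounds separately by volume comparison, which is the standard technique alluded to in the abstract. Let $v:=\mathrm{vol}(B_X)$ denote the Lebesgue volume of the unit ball in $\R^d$, and recall that translation invariance and the homogeneity of Lebesgue measure give $\mathrm{vol}(B_X(x,r))=r^d v$ for every $x\in\R^d$ and $r>0$.

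For the lower bound I would argue as follows. Suppose $B_X \subseteq \bigcup_{j=1}^n B_X(x^j,\e)$. Taking volumes and using subadditivity together with $\mathrm{vol}(B_X(x^j,\e))=\e^d v$ yields
\begin{equation*}
v = \mathrm{vol}(B_X) \le \sum_{j=1}^n \mathrm{vol}(B_X(x^j,\e)) = n\,\e^d v,
\end{equation*}
so $n\ge \e^{-d}$. Since this holds for every covering, $N_\e(B_X,X)\ge \e^{-d}$.

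For the upper bound I would use a maximal packing argument. Choose a maximal collection $x^1,\dots,x^n \in B_X$ with the property that $\|x^i-x^j\|>\e$ for all $i\neq j$; such a maximal set exists by compactness of $B_X$ and is necessarily finite. By maximality, every $x\in B_X$ lies within distance $\e$ of some $x^j$, so $\{x^j\}$ is an $\e$-net for $B_X$ and hence $N_\e(B_X,X)\le n$. To bound $n$, I would observe that the open balls $B_X^o(x^j,\e/2)$ are pairwise disjoint (by the separation condition and the triangle inequality) and all contained in $B_X(0,1+\e/2)$. Volume comparison then gives
\begin{equation*}
n\,(\e/2)^d v = \sum_{j=1}^n \mathrm{vol}(B_X^o(x^j,\e/2)) \le \mathrm{vol}(B_X(0,1+\e/2)) = (1+\e/2)^d v,
\end{equation*}
hence $n\le (1+2/\e)^d$, which finishes the proof.

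No step is really a major obstacle: both directions are one-line volume estimates once the setup is in place. The only mild point requiring care is verifying that a maximal $\e$-separated subset of $B_X$ actually exists and is finite, which follows by a standard compactness/induction argument or by applying Zorn's lemma together with the volume bound above that forces any such set to have cardinality at most $(1+2/\e)^d$.
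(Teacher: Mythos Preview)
Your argument is correct and is exactly the standard volume-comparison proof of this well-known bound. The paper itself does not supply a proof of Proposition~\ref{P1.1}; it merely records the statement as ``well known'' and moves on, so there is nothing to compare against beyond noting that your proof is the canonical one the author presumably had in mind.
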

This proposition describes the behavior of $N_\e(B_X,X)$ when $\e\to 0$. In this paper we concentrate on 
the case when $\e$ is close to $1$. In particular, we discuss the following problem: How many balls $B^o(x^j)$ are needed
for covering $B$? In other words we are interested in the number
\begin{equation}\label{1.3}
N(d,X):=\min \{n : \exists x^1,\dots,x^n : B_X \subset \cup_{j=1}^n B_X^o(x^j).
\end{equation}

We prove here that if $X$ is a uniformly smooth Banach space then $N(d,X)=d+1$. With this result in hands we discuss 
the problem: How small $\e$ can be for the relation $N_\e(B)=d+1$ to hold? The left inequality in Proposition \ref{P1.1} gives 
the lower bound for such $\e$: $\e\ge 1-\frac{\ln(d+1)}{d}$. In Section 3 we prove an upper bound: $\e \le 1- Cd^{-2}$. This upper bound follows from two different constructions given in Propositions \ref{P3.2} and \ref{P3.4}. In both constructions we use a system $\D:=\{g^j\}_{j=1}^{d+1}$ of vectors and built a covering of $B_2$ in the form $\cup_{j=1}^{d+1}B^o_2(ag^j,r)$ with an appropriate $r$. In Section 4 we apply this idea with $\D$ being an incoherent 
dictionary for covering in the Hilbert space $\ell^d_2$. We prove the following bound in Corollary \ref{C4.1}. For $r=(1-\mu^2)^{1/2}$, $\mu\in [(2n)^{-1/2},1/2]$, we have
\begin{equation}\label{1.4}
N_r(B_2) \le 2\exp(C_1d\mu^2\ln(2/\mu)).
\end{equation}

In Section 5 we use incoherent dictionaries in a smooth Banach space $X$ to build a good covering for $B_X$. Let $\rho(u)$ denote the modulus of smoothness of $X$ (see Section 3 below for definition) and $a(\mu)$ be a solution (actually, it is a unique solution) to the equation
$$
a\mu = 4\rho(2a).
$$
We prove the following bound in Corollary \ref{C5.1}. For $r=1- \frac{1}{2}\mu a(\mu)$, $\mu\le 1/2$, we have
\begin{equation}\label{1.5}
N_r(B_X) \le 2\max(C_2d,\exp(C_2d\mu^2\ln(2/\mu))).
\end{equation}
It is interesting to note (see Section 6) that in the case $X:=\ell^d_p$, $p\in[2,\infty)$, we have $1-r=\frac{1}{2}\mu a(\mu)\asymp \mu^2$ as in the case
$X=\ell^d_2$.

In Section 6 we consider several specific examples of $X$ and make a conclusion that the technique based on extremal incoherent dictionaries works well and provides either optimal or close to optimal  bounds in the sense of order of $\ln N_\e(B_X)$.  

\section{Lower bounds}

We prove the following bound in this section.

\begin{Theorem}\label{T2.1} Let  $X$ be a Banach space $\R^d$ with a norm $\|\cdot\|$. Then 
$$
N(d,X) \ge d+1.
$$
\end{Theorem}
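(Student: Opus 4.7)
The plan is to argue by contradiction via a Borsuk--Ulam type covering result. Suppose $B_X$ is covered by $n$ open unit balls $B^o(x^1),\dots,B^o(x^n)$. Restricting to the unit sphere $S:=\{x\in\R^d:\|x\|=1\}$, the sets $A_j:=B^o(x^j)\cap S$, $j=1,\dots,n$, form a relatively open cover of $S$.

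The crucial first step is the observation that no $A_j$ contains an antipodal pair. Indeed, if $y,-y\in A_j$ both satisfied $\|y-x^j\|<1$ and $\|{-y}-x^j\|<1$, the triangle inequality would give
$$
2=\|2y\|=\|(y-x^j)-(-y-x^j)\|\le\|y-x^j\|+\|y+x^j\|<2,
$$
which is absurd. So each $A_j$ misses the antipode of every point it contains.

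Next I would transport the cover from the sphere of $X$ to the Euclidean sphere. The radial projection $\varphi(x):=x/(x_1^2+\dots+x_d^2)^{1/2}$ is a homeomorphism from $S$ onto the Euclidean unit sphere $S^{d-1}$, and since $\|-x\|=\|x\|$ the map $\varphi$ is odd, so antipodal pairs correspond under $\varphi$. Consequently $\{\varphi(A_j)\}_{j=1}^n$ is an open cover of $S^{d-1}$ in which no member contains a pair of antipodal points.

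The final step is to invoke the classical Lusternik--Schnirelmann theorem (a standard corollary of Borsuk--Ulam in its open-cover form): $S^{d-1}$ admits no open cover by $d$ or fewer sets none of which contains an antipodal pair. This forces $n\ge d+1$, as claimed. I do not foresee any genuine obstacle: the antipodal-pair estimate is the only nontrivial ingredient beyond the topological input, and the rest consists in correctly setting up the hypotheses of Lusternik--Schnirelmann.
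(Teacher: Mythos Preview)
Your argument is correct. The antipodal observation is the right elementary fact, the radial projection to the Euclidean sphere is an odd homeomorphism, and the Lusternik--Schnirelmann theorem (the open-cover form of Borsuk--Ulam on $S^{d-1}$) then forces $n\ge d+1$. Since $S\subset B_X$, this indeed rules out covering $B_X$ by $d$ open unit balls.

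The paper, however, deliberately avoids the topological route. It fixes $d$ centers $x^1,\dots,x^d$, passes to the affine hull $M$ through them (at most $(d-1)$-dimensional), and proves a lemma: for any $(d-1)$-dimensional manifold $M$ one has $\sup_{y\in B_X}\inf_{x\in M}\|y-x\|\ge 1$. After reducing to the case where $M$ is a subspace, the lemma is established by pure duality: take $w\in X^*$ with $\|w\|_{X^*}=1$ and $w|_M=0$, and let $z\in B_X$ be a norming element for $w$ (which exists since $X$ is finite-dimensional, hence reflexive); then $\|z-x\|\ge |w(z-x)|=1$ for every $x\in M$, so $z$ is missed by all $d$ balls. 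The paper even remarks that the ``standard'' proof of this lemma goes through Borsuk's antipodality theorem, and presents the functional-analytic argument as an alternative.

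So your approach is exactly the classical one the author alludes to but bypasses. Your proof is shorter to state but imports a nontrivial topological black box; the paper's argument is self-contained and constructive --- it actually exhibits the uncovered point $z$ as a norming element of a functional annihilating the affine span of the centers --- at the cost of the small reduction-to-subspace step.
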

\begin{proof} We prove that any $d$ balls $B^o(x^j)$, $j=1,\dots,d$ do not cover $B$. Indeed, for a given 
set $\{B^o(x^j)\}_{j=1}^d$ consider the linear manifold $M$ passing through $x^1,\dots,x^d$:
$$
M:=\{x : x=x^1+t_1(x^2-x^1)+\dots+t_{d-1}(x^d-x^1),\quad t_j\in \R\}.
$$
It is clear that $M$ is a $(d-1)$-dimensional linear manifold. We use  Lemma \ref{2.1} below which guarantees that there is $z\in B$, $\|z\|=1$ such that for any $x\in M$ we have $\|z-x\|\ge 1$.
Then $z\in B$ is not covered by the $\cup_{j=1}^d B_X^o(x^j)$. 
\end{proof}
\begin{Lemma}\label{L2.1} Let  $X$ be a Banach space $\R^d$ with a norm $\|\cdot\|$. Then for any $(d-1)$-dimensional 
manifold $M$ we have
\begin{equation}\label{2.1} 
d(B_X,M):=\sup_{y\in B_X}\inf_{x\in M} \|y-x\|\ge 1. 
\end{equation}
\end{Lemma}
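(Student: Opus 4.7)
\medskip
\noindent\emph{Proof proposal.}
The only use of Lemma \ref{L2.1} in Theorem \ref{T2.1} is with $M$ an affine $(d-1)$-dimensional subspace, so I interpret ``$(d-1)$-dimensional manifold'' in the sense used just above in the proof of Theorem \ref{T2.1} (``linear manifold''): $M = x_0 + L$ where $L$ is a $(d-1)$-dimensional linear subspace of $\R^d$. The plan is to produce a norming functional vanishing on $L$ and evaluate it at an extremal point of $B_X$.

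First I would invoke Hahn--Banach to obtain a linear functional $f \in X^*$ with dual norm $\|f\|_{X^*} = 1$ and $f\big|_L \equiv 0$. This is possible because $L$ has codimension one, so the quotient $X/L$ is one-dimensional and inherits a norm, and any unit functional on the quotient lifts to the desired $f$. Since $\R^d$ is finite-dimensional the supremum of $f$ over $B_X$ is attained; using the symmetry $B_X = -B_X$, one has $\sup_{y \in B_X} f(y) = 1$ and $\inf_{y \in B_X} f(y) = -1$.

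Second, for any $y \in B_X$ and any $x \in M$, writing $x = x_0 + \ell$ with $\ell \in L$ gives $f(y-x) = f(y) - f(x_0)$, so
$$
\|y - x\| \;\ge\; |f(y-x)| \;=\; |f(y) - f(x_0)|.
$$
Taking the infimum over $x \in M$ and then the supremum over $y \in B_X$, and setting $c := f(x_0)$,
$$
d(B_X, M) \;\ge\; \sup_{y \in B_X}|f(y) - c| \;=\; \max(1 - c,\, 1 + c) \;\ge\; 1,
$$
where the last step is the elementary inequality $\max(|1-c|,|1+c|) \ge 1$ for every $c \in \R$.

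The only substantive step is the Hahn--Banach application producing $f$; everything else is bookkeeping. No serious obstacle is expected once the (Russian-tradition) reading of ``linear manifold'' as ``affine subspace'' is adopted, which is exactly the way the lemma is applied in Theorem \ref{T2.1}.
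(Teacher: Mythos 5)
Your proof is correct and rests on the same core idea as the paper's: find a norm-one functional vanishing on the linear part of $M$ and exhibit a point of $B_X$ where it attains its extreme value; the distance from that point to $M$ is then bounded below by the drop in the functional. The differences are in the bookkeeping. The paper first reduces to the case where $M$ is a genuine subspace through the origin, by an averaging argument against the reflected manifold $M^-$, and only then produces $w\in X^*$ with $w|_M=0$ and $\|w\|_{X^*}=1$; it then picks the norming functional $F_w\in X^{**}$ and uses reflexivity to view $F_w$ as an element of $B_X$. You skip the symmetrization entirely by keeping $M=x_0+L$ affine and noting that $\sup_{y\in B_X}|f(y)-c|=\max(1-c,\,1+c)\ge 1$ for $c=f(x_0)$, which handles the offset in one line; and you replace the norming-functional/bidual step by direct compactness of $B_X$ in finite dimensions. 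Both simplifications are sound and make the argument a bit more self-contained, while the underlying mechanism (Hahn--Banach plus an extremal point of $B_X$) is the same as in the paper.
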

\begin{proof} Without loss of generality we can assume that $M$ is a subspace. Indeed, by symmetry of $B_X$ we have
that $d(B_X,M)=d(B_X,M^-)$ where
$$
M^-:=\{x : -x\in M\}.
$$
Let
$$
M=\{x : x=x^0+t_1u^1+\dots+t_{d-1}u^{d-1},\quad t_j\in \R\}.
$$
Define a subspace 
$$
M^0:=\{x : x = t_1u^1+\dots+t_{d-1}u^{d-1},\quad t_j\in \R\}.
$$
Then $d(B_X,M^0)\le d(B_X,M)$. Indeed, for any $y\in B_X$ there are $x^+\in M$ and $x^-\in M^-$ such that
$$
\|y-x^+\|\le d(B_X,M),\qquad \|y-x^-\|\le d(B_X,M^-) = d(B_X,M).
$$
Set $x^0:= (x^++x^-)/2\in M^0$. Then
$$
\|y-x^0\|\le \|y-x^+\|/2+\|y-x^-\|/2\le d(B_X,M).
$$
So, we assume that $M$ is a subspace. A standard proof of statements like Lemma \ref{L2.1} is based on the 
antipodality theorem of Borsuk (see, for instance, \cite{LGM}, p. 405). We give a proof that is based on ideas from functional analysis. Let $w$ be a functional such that $\|w\|_{X^*}=1$ and $w(x)=0$ for $x\in M$.  Consider a norming functional $F_w$ for $w$. Our space $X$ is a reflexive Banach space. So $F_w\in B_X$. For any $x\in M$ we have
$$
\|F_w-x\| \ge |w(F_w-x)| = 1.
$$
This completes the proof of Lemma \ref{2.1}
\end{proof}

\section{Upper bounds}

We begin with the case when the norm $\|\cdot\|= \|\cdot\|_2$ is the Euclidean norm. Let $\{e^j\}_{j=1}^d$ denote 
the standard basis: $e^j_i=0$ if $i\neq j$ and $e^j_j=1$. 
\begin{Proposition}\label{P3.1} Define $x^j:=\frac{1}{2d}e^j$, $j=1,\dots,d$ and $x^{d+1}:= -\frac{1}{2d}\sum_{j=1}^d e^j$.
Then
$$
B_2\subset \cup_{j=1}^{d+1} B^o_2(x^j).
$$
\end{Proposition}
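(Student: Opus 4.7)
The plan is to expand $\|y-x^j\|_2^2$ for each of the $d+1$ centers and then, given $y\in B_2$, pick the index $j$ adapted to the coordinates of $y$. A direct computation gives, for $j\le d$,
\[
\|y-x^j\|_2^2 = \|y\|_2^2 - \frac{y_j}{d} + \frac{1}{4d^2},
\]
whereas
\[
\|y-x^{d+1}\|_2^2 = \|y\|_2^2 + \frac{1}{d}\sum_{i=1}^d y_i + \frac{1}{4d}.
\]
Thus the first $d$ balls are designed to cover points having at least one large positive coordinate, while $B_2^o(x^{d+1})$ covers points whose coordinate sum is sufficiently negative.

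Let $M:=\max_{1\le j\le d} y_j$. If $M>\tfrac{1}{4d}$, then choosing $j^*$ with $y_{j^*}=M$ gives $\|y-x^{j^*}\|_2^2 \le 1 - M/d + 1/(4d^2) < 1$, so $y\in B_2^o(x^{j^*})$. In the remaining case $M\le \tfrac{1}{4d}$, it suffices to check $\sum_{i=1}^d y_i < d(1-\|y\|_2^2)-\tfrac{1}{4}$, which puts $y\in B_2^o(x^{d+1})$. To get this, split $y=y^+-y^-$ coordinatewise with $y_j^{\pm}:=\max(\pm y_j,0)$. The hypothesis $M\le \tfrac{1}{4d}$ yields $\sum_j y_j^+\le \tfrac{1}{4}$ and $\|y^+\|_2^2\le dM^2\le \tfrac{1}{16d}$, so $\|y^-\|_2^2\ge \|y\|_2^2-\tfrac{1}{16d}$. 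Since $y^-$ has nonnegative coordinates,
\[
\Bigl(\sum_j y_j^-\Bigr)^2 \ge \sum_j (y_j^-)^2 = \|y^-\|_2^2,
\]
so $\sum_j y_j^-\ge \sqrt{\max(0,\|y\|_2^2-\tfrac{1}{16d})}$ and hence $\sum_j y_j \le \tfrac{1}{4}-\sqrt{\max(0,\|y\|_2^2-\tfrac{1}{16d})}$.

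It would then remain to verify the elementary scalar inequality
\[
d(1-u)+\sqrt{\max(0,u-\tfrac{1}{16d})} > \tfrac{1}{2}, \qquad u\in[0,1],
\]
applied with $u:=\|y\|_2^2$. On $[0,\tfrac{1}{16d}]$ the left side equals $d(1-u)\ge d-\tfrac{1}{16}>\tfrac{1}{2}$; on $[\tfrac{1}{16d},1]$ the function is strictly concave in $u$, so by the chord inequality it dominates the minimum of its two endpoint values $d-\tfrac{1}{16}$ and $\sqrt{1-\tfrac{1}{16d}}\ge\sqrt{15/16}$, both exceeding $\tfrac{1}{2}$. The main obstacle is matching the threshold $\tfrac{1}{4d}$ on $M$ with a sufficiently strong lower bound on $-\sum_i y_i$; what makes the two conditions fit together is the quadratic improvement $M\le \tfrac{1}{4d}\Rightarrow\|y^+\|_2^2\le \tfrac{1}{16d}$, which in the tight subcase $\|y\|_2=1$ forces $y^-$ to carry almost all the $\ell_2$-mass of $y$ and thus makes $\sum_i y_i^-$ comfortably larger than $\tfrac{1}{2}$.
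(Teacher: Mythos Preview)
Your argument is correct and follows the same overall strategy as the paper: the identical threshold $\max_j y_j>\tfrac{1}{4d}$ handles the first $d$ balls, and in the complementary case you bound $\sum_j y_j$ using an $\ell_1$--$\ell_2$ inequality. The paper's execution of the second case is a bit more direct: from $y_k\le a/2$ (with $a=\tfrac{1}{2d}$) it writes $y_k\le -|y_k|+a$ and then uses $\sum_k|y_k|\ge\sum_k y_k^2$ on $B_2$ to obtain $\|y-x^{d+1}\|_2^2\le 1-\tfrac{1}{4d}$ in one line, bypassing the $y^{\pm}$ split and the auxiliary scalar inequality you verify at the end.
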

\begin{proof} We begin with describing a set that is not covered by $B^o_2(x^k)$, $k\in[1,d]$. Take any point $y\in B_2$.
Then $\sum_{j=1}^d y_j^2 \le 1$. Setting $a:=\frac{1}{2d}$ we obtain
$$
\|y-x^k\|^2 = \sum_{j\neq k}y_j^2 +(y_k-a)^2.
$$
If $(y_k-a)^2 <y_k^2$ then $y\in B^o(x^k)$. Thus those $y_k$ which are not covered by $B^o(x^k)$ satisfy
the inequality $(y_k-a)^2 \ge y_k^2$ which implies $y_k\le a/2$. Therefore,
$$
B_2\setminus \cup_{k=1}^{d} B^o_2(x^k) \subset C:=\{y : y\in B_2, y_k\le a/2\}.
$$
We now prove that $C\subset B^o_2(x^{d+1})$. Indeed, for any $y\in C$ we have
$$
b:=\sum_{k=1}^d(y_k+a)^2 = \sum_{k=1}^d y_k^2 +2a\sum_{k=1}^d y_k +da^2.
$$
The inequality $y_k\le a/2$ implies $y_k \le -|y_k| + a$ and
$$
b\le \sum_{k=1}^d y_k^2 - 2a\sum_{k=1}^d |y_k| +3da^2.
$$
Using
$$
\sum_{k=1}^d |y_k| \ge \sum_{k=1}^d y_k^2
$$
we obtain
$$
b\le (1-2a)\sum_{k=1}^d y_k^2 +3da^2 \le 1-2a +3da^2\le 1-\frac{1}{4d}.
$$
\end{proof}

\begin{Proposition}\label{P3.2} Define $a:=\frac{2}{5d+1}$, $x^j:=ae^j$, $j=1,\dots,d$ and $x^{d+1}:= -a\sum_{j=1}^d e^j$.
Then
$$
B_2\subset \cup_{j=1}^{d+1} B^o_2(x^j,r)\quad \text{with}\quad r>(1-a^2)^{1/2}.
$$
\end{Proposition}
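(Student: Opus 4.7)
The plan is to mirror the case analysis of Proposition \ref{P3.1}, but with the threshold and the constant $a$ chosen so that the two resulting bounds on $\|y-x^j\|^2$ meet at exactly $1-a^2$. For an arbitrary $y\in B_2$, introduce a threshold $t>0$ and split on whether some coordinate $y_k$ exceeds $t$.

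In the first case, suppose $y_k>t$ for some $k\in[1,d]$. Then I compute
$$\|y-x^k\|^2 = \|y\|^2 - 2ay_k + a^2 \le 1 - 2at + a^2,$$
which is $\le 1-a^2$ provided $t\ge a$. In the second case, $y_k\le t$ for every $k$. Here I will follow the calculation from Proposition \ref{P3.1} essentially verbatim: expand $\|y-x^{d+1}\|^2 = \|y\|^2 + 2a\sum_k y_k + da^2$, use the elementary inequality $y_k \le 2t - |y_k|$ (valid for $y_k\le t$ with $t\ge 0$), and then apply $\sum|y_k|\ge \sum y_k^2$ (which holds because $|y_k|\le 1$ on $B_2$) to fold the $-2a\sum|y_k|$ term into the $\|y\|^2$ term. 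This yields
$$\|y-x^{d+1}\|^2 \le (1-2a)\|y\|^2 + 4adt + da^2 \le 1 - 2a + 4adt + da^2,$$
which is $\le 1-a^2$ provided $4adt \le 2a - a^2(d+1)$.

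The two requirements on $t$ (namely $t\ge a$ from the first case and $4adt \le 2a - a^2(d+1)$ from the second) are simultaneously satisfiable exactly when $4a^2d \le 2a - a^2(d+1)$, i.e.\ $a(5d+1)\le 2$. The boundary choice $a=\tfrac{2}{5d+1}$ with $t=a$ saturates both inequalities, so every $y\in B_2$ satisfies $\|y-x^j\|^2 \le 1-a^2$ for some $j\in[1,d+1]$. Since the statement asks for $r$ strictly greater than $(1-a^2)^{1/2}$, the covering is immediate: $\|y-x^j\|\le (1-a^2)^{1/2} < r$, so $y\in B^o_2(x^j,r)$.

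The computations are routine; the only conceptual subtlety is recognizing that the optimal threshold here is $t=a$ (rather than $t=a/2$ as in Proposition \ref{P3.1}), and then verifying that the resulting system of inequalities pins $a$ to precisely $\tfrac{2}{5d+1}$. No step will be a genuine obstacle.
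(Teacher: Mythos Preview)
Your proof is correct and follows essentially the same route as the paper: split on whether some coordinate exceeds the threshold $a$, use $y_k\le 2a-|y_k|$ together with $\sum|y_k|\ge\sum y_k^2$ in the second case, and arrive at the bound $(1-2a)\|y\|_2^2+5da^2\le 1-a^2$ for $a=\tfrac{2}{5d+1}$. The only difference is cosmetic: you introduce the threshold $t$ as a free parameter and then optimize to $t=a$, whereas the paper sets the threshold to $a$ from the outset.
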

\begin{proof} The proof repeats the proof of Proposition \ref{P3.1}. We only point out the places where we make changes.
First, we note that if $y_k>a$ then $y_k^2 - (y_k-a)^2 > a^2$. Therefore, in this case $y\in B^o_2(x^k,r)$. We have
$$
B_2\setminus \cup_{k=1}^{d} B^o_2(x^k,r) \subset C':=\{y : y\in B_2, y_k\le a\}.
$$
We now prove that $C'\subset B^o_2(x^{d+1},r)$. Similar to the above argument we get
$$
b\le (1-2a)\sum_{k=1}^d y_k^2 +5da^2 \le 1-2a +5da^2= 1-a^2 <r^2.
$$
\end{proof}

For a Banach space $X$ we define the modulus of smoothness
$$
\rho(u) := \sup_{\|x\|=\|y\|=1}(\frac{1}{2}(\|x+uy\|+\|x-uy\|)-1).
$$
The uniformly smooth Banach space is the one with the property
$$
\lim_{u\to 0}\rho(u)/u =0.
$$

\begin{Proposition}\label{P3.3} Let $X$ be a uniformly smooth Banach space $\R^d$ with norm $\|\cdot\|$.  Define $x^j:=ae^j$, $j=1,\dots,d$ and $x^{d+1}:= -a\sum_{j=1}^d e^j$.
Then there exists an $a>0$ such that
\begin{equation}\label{3.1}
B\subset \cup_{j=1}^{d+1} B^o(x^j).
\end{equation}
\end{Proposition}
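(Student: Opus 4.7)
\medskip

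\noindent\emph{Proof plan.} The plan is to mimic the Euclidean argument of Proposition~\ref{P3.2}, with the coordinate $y_k$ replaced by the dual pairing $F_y(e^k)$, where $F_y\in X^*$ is the norming functional of $y$: $\|F_y\|_{X^*}=1$ and $F_y(y)=\|y\|$. Since $X$ is uniformly smooth, this $F_y$ is unique for each $y\ne 0$ and $y\mapsto F_y$ is continuous on $X\setminus\{0\}$. The standard upper bound derived from the definition of $\rho$ is
$$
\|y+u\|\le \|y\|+F_y(u)+2\|y\|\,\rho(\|u\|/\|y\|),\qquad y\ne 0,\ u\in X.
$$

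First I would package the $d+1$ centers symmetrically by setting $u_j:=e^j$ for $j\le d$ and $u_{d+1}:=-\sum_{j=1}^d e^j$, so that $x^j=au_j$ and, crucially, $\sum_{j=1}^{d+1}u_j=0$. Let $M:=\max_{j\le d+1}\|u_j\|$. For $y\in B$ with $\|y\|\ge 1/2$, the displayed inequality applied with $u=-x^j$ gives
$$
\|y-x^j\|\le 1-aF_y(u_j)+2\rho(2aM),
$$
hence $y\in B^o(x^j)$ as soon as $F_y(u_j)>\delta(a):=2\rho(2aM)/a$. Vectors with $\|y\|<1/2$ satisfy $\|y-x^j\|\le 1/2+aM<1$ directly once $a<1/(2M)$, so they are covered trivially.

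The main step is to show that the continuous function $c(y):=\max_{j\le d+1}F_y(u_j)$ has a strictly positive minimum on the compact unit sphere $S_X$. Strict positivity at every $y\in S_X$ is forced by the cancellation $\sum_{j=1}^{d+1}u_j=0$: if $c(y)\le 0$ then $\sum_j F_y(u_j)=F_y(0)=0$ compels $F_y(u_j)=0$ for all $j$, so $F_y$ vanishes on $\sp\{e^1,\dots,e^d\}=\R^d$, contradicting $\|F_y\|_{X^*}=1$. Continuity of $y\mapsto F_y$ (from smoothness) together with compactness of $S_X$ then yield $c_0:=\min_{y\in S_X}c(y)>0$.

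Uniform smoothness guarantees $\delta(a)\to 0$ as $a\to 0^+$, so I would fix $a>0$ satisfying both $a<1/(2M)$ and $\delta(a)<c_0$. Using the scale invariance $F_{y/\|y\|}=F_y$, for any $y\in B$ with $\|y\|\ge 1/2$ some index $j$ realizes $F_y(u_j)=c(y/\|y\|)\ge c_0>\delta(a)$, whence $y\in B^o(x^j)$ by the earlier step; combined with the trivial case, this proves (\ref{3.1}). The only non-routine ingredient is the positivity of $c_0$ on $S_X$, which rests entirely on the symmetry $\sum u_j=0$ together with the non-degeneracy of $F_y$; once this is established, uniform smoothness absorbs the error term $2\rho(2aM)$ into $ac_0$ for sufficiently small $a$.
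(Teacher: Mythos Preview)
Your argument is correct. Both proofs rest on Lemma~\ref{L3.1} and on the cancellation $\sum_{j=1}^{d+1}u_j=0$, but the way you extract a uniform positive lower bound for $\max_j F_y(u_j)$ is genuinely different from the paper. The paper works by hand: it first uses equivalence of norms to find some $k$ with $|F_y(e^k)|\ge c_1$, and then runs a three-case analysis on the sign and size of $F_y\bigl(\sum_j e^j\bigr)$ to locate an index $j$ with $F_y(u_j)$ bounded below by an explicit constant of the form $c_1/(2d)$. You instead observe directly that $\max_j F_y(u_j)>0$ pointwise (from $\sum_j F_y(u_j)=0$ and nondegeneracy of $F_y$), invoke the norm-to-norm continuity of the duality map $y\mapsto F_y$ that uniform smoothness provides, and use compactness of $S_X$ to upgrade this to a uniform $c_0>0$. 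Your route is shorter and more conceptual, and it makes transparent why the specific configuration $\{e^1,\dots,e^d,-\sum e^j\}$ works (any spanning system with zero sum would do); the paper's route avoids appealing to continuity of $F_y$ and yields more explicit, if still $d$-dependent, constants. Both ultimately lean on finite-dimensionality, yours through compactness of the sphere, the paper's through the equivalence-of-norms constant $C_1(d)$.
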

\begin{proof} Embedding (\ref{3.1}) is equivalent to the claim that for each $y\in B$ at least one of the following $d+1$ inequalities 
is satisfied
\begin{equation}\label{3.2}
\|y-ae^j\|<1,\quad j\in [1,d];
\end{equation}
\begin{equation}\label{3.3}
\|y+a\sum_{j=1}^de^j\| < 1.
\end{equation}
In the proof that follows parameter $a$ is small. We assume that $a<1/2$. Then for $y$ such that $\|y\|\le 1/2$ all inequalities
(\ref{3.2}) are satisfied. Therefore, in further argument it is sufficient to consider $y$ such that $1/2<\|y\|\le 1$. 

For $x\neq 0$ let $F_x$ be a norming functional for $x$: $\|F_x\|_{X^*}=1$ and $F_x(x)=\|x\|$. Existence of such a functional follows from the Hahn-Banach theorem.  
We note that from the definition of modulus of smoothness we get the following inequality (see, for instance, \cite{Tbook}, p.336).
 
\begin{Lemma}\label{L3.1} Let $x\neq0$. Then
$$
0\le \|x+uy\|-\|x\|-uF_x(y)\le 2\|x\|\rho(u\|y\|/\|x\|)  
$$
where $F_x$ is a norming functional of $x$.
\end{Lemma}
This lemma implies the following inequalities
$$
\|y-ae^j\| \le \|y\| -a F_y(e^j)+2\|y\|\rho(a/\|y\|)
$$
\begin{equation}\label{3.4}
 \le  \|y\| -a F_y(e^j)+2\|y\|\rho(2a),\quad j\in [1,d];
\end{equation}
\begin{equation}\label{3.5}
\|y+a\sum_{j=1}^de^j\| \le \|y\| + aF_y(\sum_{j=1}^d e^j) +2\rho(2da).
\end{equation}
Here, $F_y$ is the norming functional of $y$. 

First, we note that for some $k$ the $|F_y(e^k)|$ is large enough. Indeed, let $y=\sum_{j=1}^d y_je^j$. Then
$$
|y_j| \le C_1(d) \|y\|,\quad j=1,\dots,d.
$$
We have
$$
\|y\| = F_y(y) = \sum_{j=1}^d y_jF_y(e^j) \le C_1(d)\|y\|\sum_{j=1}^d |F_y(e^j)|,
$$
which implies that for some $k\in[1,d]$ 
\begin{equation}\label{3.6}
|F_y(e^k)| \ge (dC_1(d))^{-1} =: c_1.
\end{equation} 
Set $b:=c_1/2$ and consider three cases:
\begin{equation}\label{3.7}
F_y(\sum_{j=1}^de^j) \le -b,
\end{equation}
\begin{equation}\label{3.8}
F_y(\sum_{j=1}^de^j) \ge b,
\end{equation}
\begin{equation}\label{3.9}
|F_y(\sum_{j=1}^de^j)| < b.
\end{equation}
In the case (\ref{3.7}) inequality (\ref{3.5}) implies (\ref{3.3}) if $a:=a(b,\rho,d)$ is sufficiently small (remind that 
uniform smoothness assumption implies $\rho(u)/u \to 0$ as $u\to 0$). In the case (\ref{3.8}) we have for some $k\in [1,d]$
that $F_y(e^k) \ge b/d$ and this is sufficient to derive (\ref{3.2}) with $j=k$ from (\ref{3.4}) and small $a$. 

Consider the case (\ref{3.9}). Inequality (\ref{3.6}) guarantees that either $F_y(e^k)\ge c_1$ or $-F_y(e^k)\ge c_1$. In case 
$F_y(e^k)\ge c_1$ we complete the proof as in case (\ref{3.8}). In case $-F_y(e^k)\ge c_1$ our assumption (\ref{3.9}) implies that
$$
\sum_{j=1}^dF_y(e^j)> -b
$$
and
$$
\sum_{j\neq k}^dF_y(e^j) > -b -F_y(e^k) \ge -b+c_1 = c_1/2.
$$
Therefore, for some $m$
$$
F_y(e^m)\ge \frac{c_1}{2(d-1)}
$$
and we complete the proof as in case (\ref{3.8}).
\end{proof}

We now discuss another way of constructing a $(d+1)$-covering of the Euclidean ball. It is based on the tight frames 
construction. We begin with a conditional statement.
\begin{Proposition}\label{P3.4} Let $\Phi:=\{\ff^j\}_{j=1}^{d+1}$ be a system of normalized vectors, $\|\ff^j\|_2=1$, $j=1,\dots,d+1$, satisfying the condition
$$
\<\ff^i,\ff^j\> = -\frac{1}{d},\quad 1\le i\neq j\le d+1.
$$
Then, there exists an $a>0$ such that
$$
B_2 \subset \cup_{j=1}^{d+1} B^o_2(a\ff^j).
$$
\end{Proposition}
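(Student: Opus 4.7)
The plan is to exploit two structural consequences of the hypothesis $\<\ff^i,\ff^j\>=-1/d$ for $i\neq j$. First, computing $\|\sum_j \ff^j\|_2^2 = (d+1) + d(d+1)(-1/d) = 0$ gives $\sum_{j=1}^{d+1}\ff^j=0$. Second, $\Phi$ is a tight frame: the frame operator $Sy := \sum_j \<y,\ff^j\>\ff^j$ satisfies $S\ff^i = \ff^i + \sum_{j\neq i}\<\ff^i,\ff^j\>\ff^j = \ff^i - (1/d)\sum_{j\neq i}\ff^j = \frac{d+1}{d}\ff^i$, so $S = \frac{d+1}{d}I$ and the Parseval-type identity $\sum_{j=1}^{d+1}\<y,\ff^j\>^2 = \frac{d+1}{d}\|y\|_2^2$ holds for every $y\in\R^d$.

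With these in hand I would unroll $y \in B_2^o(a\ff^j)$ into $\|y\|_2^2 - 2a\<y,\ff^j\> + a^2 < 1$ and split on $\|y\|_2$. For $\|y\|_2 \le 1/2$ and $a < 1/2$ we trivially have $\|y - a\ff^j\|_2 \le 1/2 + a < 1$ for every $j$. For $\|y\|_2 > 1/2$ the plan is to locate an index $j$ with $\<y,\ff^j\> \ge c(d) > 0$ and then pick $a = a(d) > 0$ small enough to force the displayed inequality. To locate such a $j$, the Parseval identity yields some $k$ with $|\<y,\ff^k\>| \ge \|y\|_2/\sqrt d$. If that maximum is realized with a positive sign, set $j = k$. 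Otherwise $\<y,\ff^k\> \le -\|y\|_2/\sqrt d$, and combining with $\sum_j \<y,\ff^j\> = 0$ forces $\sum_{j\neq k}\<y,\ff^j\> \ge \|y\|_2/\sqrt d$, so at least one $j \neq k$ satisfies $\<y,\ff^j\> \ge \|y\|_2/(d\sqrt d)$. Either way $\max_j \<y,\ff^j\> \ge \|y\|_2/d^{3/2} \ge 1/(2d^{3/2})$, and any $a$ slightly less than $1/d^{3/2}$ then makes $\|y - a\ff^j\|_2^2 < 1$.

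The hard part will be the case where the largest of $|\<y,\ff^j\>|$ is realized by a negative inner product: the tight-frame identity only controls absolute values, and on its own it does not produce a positive direction to move in. The sum-is-zero identity is exactly what rescues this, trading a large negative inner product for a guaranteed positive one elsewhere at the cost of an extra factor of $1/d$. Once this combinatorial step is in place, the selection of $a$ and the final verification are entirely routine, since the proposition only demands existence and not an optimal radius.
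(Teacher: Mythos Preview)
Your argument is correct and follows essentially the same path as the paper's proof: both derive $\sum_j \ff^j = 0$ and the tight-frame/Parseval identity, dispose of $\|y\|_2 \le 1/2$ trivially, and for $\|y\|_2 > 1/2$ locate an index $j$ with $\langle y,\ff^j\rangle$ bounded below by a positive constant depending only on $d$. The one difference is in that combinatorial step: the paper isolates a short lemma (if $y\in\R^N$ has $\sum_i y_i = 0$ then some $y_k \ge \|y\|_2/(2(N-1))$, proved via $\|y\|_1 \ge \|y\|_2$), which yields $\langle x,\ff^k\rangle \ge 1/(4d)$ and the explicit choice $a = 1/(8d)$ with $\|x-a\ff^k\|_2^2 \le 1 - 1/(64d^2)$, whereas your two-step pigeonhole gives only $\langle y,\ff^j\rangle \ge 1/(2d^{3/2})$ --- but since the proposition asserts mere existence of $a$, this looser constant is entirely adequate.
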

\begin{proof} In our proof $a$ is a small number. Let $a<1/2$. Then for any $x$, $\|x\|_2\le 1/2$, and any $k\in [1,d+1]$ we have
$$
\|x-a\ff_k\|_2 <1.
$$
Thus, it is sufficient to consider $x$ such that $1/2\le \|x\|_2\le 1$. For each $k$ we have
\begin{equation}\label{3.10}
\|x-a\ff^k\|^2_2 = \|x\|_2^2 +a^2 - 2a\<x,\ff^k\>.
\end{equation}
We now need to estimate $\<x,\ff^k\>$ from below. It is easy to check that our assumptions on $\Phi$ imply 
the relations
\begin{equation}\label{3.11}
x=\frac{d}{d+1}\sum_{i=1}^{d+1}\<x,\ff^i\>\ff^i,
\end{equation}
\begin{equation}\label{3.12}
\sum_{i=1}^{d+1}\ff^i=0,
\end{equation}
\begin{equation}\label{3.13}
\|x\|_2^2=\frac{d}{d+1}\sum_{i=1}^{d+1}\<x,\ff^i\>^2.
\end{equation}
We now need a simple technical lemma.
\begin{Lemma}\label{L3.2} If $y\in \R^N$ is such that $\sum_{i=1}^N y_i=0$ then there exists $k$ satisfying
$$
y_k\ge \frac{\|y\|_2}{2(N-1)}.
$$
\end{Lemma}
\begin{proof} The proof goes by contradiction. Suppose $y_j< \frac{\|y\|_2}{2(N-1)}$ for all $j$. Denote
$$
E^+:=\{j : y_j>0\},\qquad E^-:=\{j : y_j<0\}.
$$
Then our assumption implies (note that $|E^+|\le N-1$)
$$
\sum_{j\in E^+} y_j < \|y\|_2/2,
$$
and, therefore,
$$
\|y\|_1 = \sum_{j=1}^N |y_j| = 2\sum_{j\in E^+} y_j < \|y\|_2.
$$
It is a contradiction.  
\end{proof}

We apply Lemma \ref{L3.2} with $N:=d+1$, $y_j:=\<x,\ff^j\>$. Then the condition $\sum_{i=1}^N y_i=0$ follows from
(\ref{3.12}). Thus, by (\ref{3.13}), taking into account that $\|x\|_2\ge 1/2$, we derive from Lemma \ref{L3.2} that there exists $k$ such that
$$
\<x,\ff^k\> \ge (2d)^{-1}(\sum_{i=1}^{d+1}\<x,\ff^i\>^2)^{1/2} \ge \frac{1}{4d}.
$$
By (\ref{3.10}) we obtain for this $k$ 
$$
\|x-a\ff^k\|_2^2 \le 1+a^2-\frac{a}{4d}.
$$
Specifying $a=\frac{1}{8d}$ we get
$$
\|x-a\ff^k\|_2^2 \le 1-\frac{1}{64d^2}.
$$
\end{proof}

We now discuss a question of existence and construction of systems $\Phi$
from Proposition \ref{P3.4}. We only give one example of such construction which 
is based on the Hadamard matrices.
Hadamard matrices are very useful in both theoretical research and
engineering applications. In particular, Hadamard matrices are very
popular in error-correction coding theory.  
A {\it Hadamard matrix} of order $n$ is an $n\times n$ matrix $H_n$
with all entries $1$ or $-1$, and
\begin{equation*} 
H_n^{T}H_n \;=\; nI_n
\end{equation*}
where $I_n$ is the identity matrix.
Obviously, any two columns or any two rows of a Hadamard matrix
$H_n$ are mutually orthogonal. This orthogonality is kept
  if we permute some rows or columns, or multiply some
rows or columns by -1. Therefore, given any Hadamard matrix, we can
always make a new Hadamard matrix which has all 1's in the first row
   by
multiplying some columns   by -1.
  Hadamard matrices only exist for special orders $n$.  The following lemma  and remark are from \cite{LW01}.

\begin{Lemma}\label{L3.3}  
If $H_n$ is a Hadamard matrix of order $n$, then $n\;=\;1$,
$n\;=\;2$, or $n\;\equiv\;0\;(mod\; 4)$.
\end{Lemma}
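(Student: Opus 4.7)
The plan is to handle the small cases $n=1,2$ by inspection (examples $H_1=(1)$ and $H_2=\begin{pmatrix}1&1\\1&-1\end{pmatrix}$) and then assume $n\ge 3$, reducing to a congruence argument on just three rows.

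First, I would exploit the normalization freedom noted in the paragraph preceding the lemma: multiplying individual rows or columns by $-1$ preserves the Hadamard property. So I may assume without loss of generality that the first row of $H_n$ is the all-ones row $(1,1,\dots,1)$ (multiply each column whose first entry is $-1$ by $-1$). A second round of multiplications of rows then lets me assume the first column is also the all-ones vector.

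Now for the parity step, assume $n\ge 2$ and look at the second row $r_2$. Since $r_2$ is orthogonal to the all-ones first row, the sum of its entries is $0$, which forces the number of $+1$s to equal the number of $-1$s, hence $n$ is even. This already disposes of the case $n=2$ up to the stronger divisibility, but we need $4\mid n$ when $n\ge 3$.

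For the divisibility step, assume $n\ge 3$ and consider rows $r_1,r_2,r_3$ (still with $r_1$ all ones). Partition the column indices $\{1,\dots,n\}$ according to the four possible value pairs of $(r_2(j),r_3(j))\in\{\pm 1\}^2$, and let $a,b,c,d$ be the cardinalities of the classes where these pairs are $(+,+),(+,-),(-,+),(-,-)$, respectively. The four orthogonality relations among $r_1,r_2,r_3$ translate into
\begin{align*}
a+b+c+d &= n,\\
a+b-c-d &= \langle r_1,r_2\rangle = 0,\\
a-b+c-d &= \langle r_1,r_3\rangle = 0,\\
a-b-c+d &= \langle r_2,r_3\rangle = 0.
\end{align*}
Adding all four gives $4a=n$, so $4\mid n$. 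The only obstacle worth noting is being careful that the use of three rows genuinely requires $n\ge 3$; for $n=1,2$ not enough rows exist to form this system, which is precisely why those two exceptional values appear in the statement. No further calculation is needed.
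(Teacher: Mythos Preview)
Your argument is correct and is exactly the classical three-row counting proof. Note, however, that the paper does not supply its own proof of this lemma: it is quoted without proof from \cite{LW01}, so there is nothing to compare against beyond observing that your argument is the standard textbook one (and indeed is the proof given in van Lint--Wilson). One cosmetic remark: the second normalization making the first \emph{column} all ones is never used in your computation, since only the fact that $r_1$ is the all-ones row enters the four linear relations; you may safely omit it.
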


\begin{Remark}\label{R3.1}
One of the famous conjectures in the area of combinatorial designs
states that a Hadamard matrix of order $n$ exists for every $n\equiv
0 \;(mod\; 4)$. But we are still very far from a proof of this
conjecture. The smallest $n$ for which a Hadamard matrix could exist
but no example is known presently 428.
\end{Remark}

There exists a variety of methods to construct Hadamard matrices. We can construct Hadamard matrices from so-called
{\it conference matrices} (see \cite{LW01}). We will not discuss this way. 
For illustration purposes we provide a very simple construction of Hadamard matrices of order $2^k$. 
The following lemma provides a recursive method to
build Hadamard matrices of order $2^k$, where $k=0,1,2,...$.

\begin{Lemma} \label{L3.4}
For $k=0, 1, 2, ...$, the matrices generated by
\begin{align*} 
&H_1 \;=\; 
\begin{bmatrix}
1
\end{bmatrix},\\
&H_2 \;=\;
\begin{bmatrix}
1 & 1\\
1 & -1
\end{bmatrix},\\
&H_{2^{k+1}} \;=\;
\begin{bmatrix}
H_{2^k} & H_{2^k} \\
H_{2^k} & -H_{2^k}
\end{bmatrix},  
\end{align*}
are Hadamard matrices.
\end{Lemma}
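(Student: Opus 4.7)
The plan is a straightforward induction on $k$. The base cases are immediate: $H_1 = [1]$ trivially satisfies $H_1^T H_1 = I_1$, and a direct computation gives $H_2^T H_2 = 2 I_2$. In both cases every entry lies in $\{-1,1\}$, so the two defining properties of a Hadamard matrix are verified.

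For the inductive step, I would assume that $H_{2^k}$ is a Hadamard matrix, so that $H_{2^k}^T H_{2^k} = 2^k I_{2^k}$ and every entry of $H_{2^k}$ is $\pm 1$. The entry condition for $H_{2^{k+1}}$ is then immediate from the block recursion, since each entry of $H_{2^{k+1}}$ is either an entry of $H_{2^k}$ or its negation, which is still $\pm 1$.

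The key computation is to verify the orthogonality condition by $2 \times 2$ block multiplication. Using
$$
H_{2^{k+1}}^T \;=\; \begin{bmatrix} H_{2^k}^T & H_{2^k}^T \\ H_{2^k}^T & -H_{2^k}^T \end{bmatrix},
$$
one multiplies out
$$
H_{2^{k+1}}^T H_{2^{k+1}} \;=\; \begin{bmatrix} 2 H_{2^k}^T H_{2^k} & H_{2^k}^T H_{2^k} - H_{2^k}^T H_{2^k} \\ H_{2^k}^T H_{2^k} - H_{2^k}^T H_{2^k} & 2 H_{2^k}^T H_{2^k} \end{bmatrix}.
$$
The off-diagonal blocks vanish identically, and the diagonal blocks equal $2 \cdot 2^k I_{2^k} = 2^{k+1} I_{2^k}$ by the inductive hypothesis. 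Reassembling, $H_{2^{k+1}}^T H_{2^{k+1}} = 2^{k+1} I_{2^{k+1}}$, which closes the induction.

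There is no real obstacle here; the recursion is designed precisely so that the block algebra collapses. The only care needed is to transpose the block layout correctly (since transposing a block matrix swaps both the outer positions and the inner blocks), and to observe that the sign in the bottom-right block is exactly what forces the off-diagonal blocks to cancel.
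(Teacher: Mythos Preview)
Your proof is correct and follows essentially the same approach as the paper: induction on $k$, with the inductive step carried out by the same $2\times 2$ block multiplication showing $H_{2^{k+1}}^T H_{2^{k+1}} = 2^{k+1} I_{2^{k+1}}$. You additionally make explicit the (trivial) check that all entries remain $\pm 1$, which the paper leaves implicit.
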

\begin{proof} Clearly, $H_1$ and $H_2$ are Hadamard matrices of order 1 and 2
respectively.
Assume $H_{2^k}$ is a Hadamard matrix of order $2^k$, then
\begin{equation*}
H_{2^k}^{T}H_{2^k} \;=\; 2^kI_{2^k}.
\end{equation*}
We need to show that
\begin{equation*} 
H_{2^{k+1}}^{T}H_{2^{k+1}} \;=\; 2^{k+1}I_{2^{k+1}}.
\end{equation*}
Indeed,
\begin{align*}
H_{2^{k+1}}^{T}H_{2^{k+1}} \;&=\;
\begin{bmatrix}
H_{2^k} & H_{2^k} \\
H_{2^k} & -H_{2^k}
\end{bmatrix}^T
\begin{bmatrix}
H_{2^k} & H_{2^k} \\
H_k & -H_k
\end{bmatrix}\\
&=\;
\begin{bmatrix}
H_{2^k}^T & H_{2^k}^T \\
H_{2^k}^T & -H_{2^k}^T
\end{bmatrix}
\begin{bmatrix}
H_{2^k} & H_{2^k} \\
H_{2^k} & -H_{2^k}
\end{bmatrix} \\
&=\;
\begin{bmatrix}
2H_{2^k}^{T}H_{2^k} & 0 \\
0 & 2H_{2^k}^{T}H_{2^k}
\end{bmatrix}\\
&=\;
\begin{bmatrix}
2^{k+1}I_{2^k} & 0 \\
0 & 2^{k+1}I_{2^k}
\end{bmatrix}\\
&=\; 2^{k+1}I_{2^{k+1}}.
\end{align*}
 
\end{proof}

We can build higher order Hadamard matrices from the
Kronecker product of lower order Hadamard matrices.
Let matrix $A\in R^{n\times m}$ with entries $a_{ij}$ and $B\in
R^{l\times k}$. Then the \emph{Kronecker product} $A \otimes B$ of
$A$ and $B$ is a $nl\times mk$ matrix,
\begin{equation*} 
A \otimes B \;=\;
\begin{bmatrix}
a_{11}B & a_{12}B & \cdots  & a_{1m}B\\
a_{21}B & a_{22}B & \cdots  & a_{2m}B\\
\vdots & \vdots & \ddots & \vdots\\
a_{n1}B & a_{n2}B & \cdots  & a_{nm}B\\
\end{bmatrix}.
\end{equation*}
The following simple lemma is known.
\begin{Lemma} \label{L3.5}
If $H_m$ and $H_n$ are Hadamard matrices of order $m$ and $n$
respectively, then $H_m \otimes H_n$ is a Hadamard matrix of order
$mn$.
\end{Lemma}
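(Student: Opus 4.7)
The plan is to verify directly the two defining properties of a Hadamard matrix: that every entry of $H_m\otimes H_n$ lies in $\{-1,+1\}$, and that $(H_m\otimes H_n)^T(H_m\otimes H_n)=mnI_{mn}$. Both follow cleanly from the definition of the Kronecker product together with two standard algebraic identities that can be read off the block form.

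First I would check the entries. By the block definition of $A\otimes B$, every entry of $H_m\otimes H_n$ has the form $(H_m)_{ij}(H_n)_{kl}$, i.e.\ a product of an entry of $H_m$ and an entry of $H_n$. Since each of these entries is $\pm 1$, so is their product.

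Second, I would verify the orthogonality relation using the two identities $(A\otimes B)^T=A^T\otimes B^T$ and $(A\otimes B)(C\otimes D)=(AC)\otimes(BD)$, valid whenever the products $AC$ and $BD$ are defined. Both identities are immediate from the block definition written out in the statement of the lemma. Applying them, together with the trivial fact $I_m\otimes I_n=I_{mn}$, gives
$$
(H_m\otimes H_n)^T(H_m\otimes H_n)=(H_m^T\otimes H_n^T)(H_m\otimes H_n)=(H_m^TH_m)\otimes(H_n^TH_n)=(mI_m)\otimes(nI_n)=mnI_{mn},
$$
which is exactly the Hadamard condition at order $mn$.

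There is no real obstacle; the only judgement call is whether to cite the Kronecker identities or to prove them in passing. If a fully self-contained proof is preferred, the orthogonality can be seen at the level of columns: a column of $H_m\otimes H_n$ indexed by a pair $(j,l)$ is the vector whose $(i,k)$-entry equals $(H_m)_{ij}(H_n)_{kl}$, so the inner product of the columns indexed by $(j,l)$ and $(j',l')$ factors as $\langle h_m^j,h_m^{j'}\rangle\langle h_n^l,h_n^{l'}\rangle=m\delta_{jj'}\cdot n\delta_{ll'}=mn\,\delta_{(j,l),(j',l')}$, recovering the same identity.
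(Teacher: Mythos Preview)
Your proof is correct. Note, however, that the paper does not actually supply a proof of Lemma~\ref{L3.5}; it is simply stated as known, so there is no paper argument to compare against. Your approach via the Kronecker identities $(A\otimes B)^T=A^T\otimes B^T$ and $(A\otimes B)(C\otimes D)=(AC)\otimes(BD)$ is the standard one, and your alternative columnwise computation is equally valid. Incidentally, the paper's proof of the special case Lemma~\ref{L3.4} (the recursion $H_{2^{k+1}}=H_2\otimes H_{2^k}$) proceeds by exactly the kind of direct block-matrix multiplication you sketch as the self-contained alternative, so your argument is fully in the spirit of the surrounding text.
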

 This lemma provides a good way to
build higher order Hadamard matrices from known lower order ones. We
can see that Lemma \ref{L3.4} is  a corollary of
Lemma \ref{L3.5}, where the recursion is
$H_{2^{k+1}}\;=\;H_2\otimes H_{2^k}$.

The Hadamard matrices were used in \cite{XL} for construction systems from Proposition \ref{P3.4}.
Such systems are called absolutely equiangular tight frames in \cite{XL}.  
\begin{Theorem}\label{T3.1}
Let $H_m$ be a Hadamard matrix with all $1's$ in the first row and
$m=n+1$. Then, the columns of the matrix $\Phi$ generated by deleting the first row
of $H_m$ and dividing by $\sqrt{n}$ form an absolutely equiangular
tight frame.
\end{Theorem}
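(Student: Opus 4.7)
The plan is to verify the two defining properties of the system $\Phi = \{\varphi^j\}_{j=1}^{n+1}$ required by Proposition \ref{P3.4}, namely that each $\varphi^j$ is a unit vector in $\ell^n_2$ and that $\langle \varphi^i,\varphi^j\rangle = -1/n$ for $i\neq j$. Both will follow immediately from the defining identity $H_m^T H_m = m I_m = (n+1)I_m$ together with the hypothesis that the first row of $H_m$ consists entirely of $1$'s.

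First I would fix notation: let $h^j \in \R^{n+1}$ denote the $j$-th column of $H_m$, and write $h^j = (1, \tilde h^j)$, where the leading $1$ records the hypothesis on the first row and $\tilde h^j \in \R^n$ consists of the remaining $n$ entries of $h^j$. By construction, $\varphi^j = \tilde h^j/\sqrt{n}$. Since the entries of $H_m$ are all $\pm 1$, we have $\|h^j\|_2^2 = n+1$, hence
\[
\|\tilde h^j\|_2^2 = \|h^j\|_2^2 - 1 = n,
\]
which gives $\|\varphi^j\|_2 = 1$ and establishes the normalization.

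Next I would compute inner products between distinct columns. For $i\neq j$, orthogonality of the columns of $H_m$ (i.e.\ $H_m^T H_m = (n+1)I_{n+1}$) yields $\langle h^i, h^j\rangle = 0$. Splitting off the contribution from the first coordinate, which equals $1\cdot 1 = 1$, gives
\[
\langle \tilde h^i,\tilde h^j\rangle = \langle h^i,h^j\rangle - 1 = -1,
\]
and therefore $\langle \varphi^i,\varphi^j\rangle = -1/n$, as required.

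There is no real obstacle here: the proof is a one-line consequence of the Hadamard identity once the first row is stripped off. The only mild care needed is bookkeeping (keeping the $(n+1)$-dimensional ambient space of $H_m$ distinct from the $n$-dimensional target space where $\Phi$ lives) and observing that the choice $m=n+1$ is precisely what makes the normalization $\sqrt{n}$ consistent with unit length. With these two computations in place, the hypotheses of Proposition \ref{P3.4} are verified and the theorem follows.
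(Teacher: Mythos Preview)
Your proof is correct and follows essentially the same approach as the paper: use the orthogonality of the columns of $H_m$, split off the contribution of the first coordinate (which is $1\cdot 1$), and divide by $n$. You are slightly more thorough in explicitly verifying the normalization $\|\varphi^j\|_2=1$, which the paper leaves implicit.
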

\begin{proof}
   All columns of $H_m$ are mutually orthogonal. In other words,
for any $1\leq i\neq j \leq m$, the two columns $h_i$ and $h_j$ of
$H_m$ satisfy $\<h_i, h_j\> \;=\; 0$.

Since the first elements of $h_i$ and $h_j$ are both 1, the
corresponding columns $\varphi_i$ and $\varphi_j$ of $\Phi$ satisfy
\begin{equation*}
\<\varphi_i, \varphi_j\> \;=\; \frac{1}{n}(\<h_i, h_j\>-1)
\;=\;\frac{1}{n}(0-1) \;=\; -\frac{1}{n},
\end{equation*}
for all $1\leq i\neq j \leq m$.
\end{proof}

\section{Covering using incoherent dictionaries}

Proposition \ref{P3.4} demonstrates how special dictionaries can be used for building coverings. In this section we discuss an application of incoherent dictionaries in Euclidean space. Let $\D=\{g^k\}_{k=1}^N$ be a normalized ($\|g^k\|=1$, $k=1,\dots,N$) system of vectors in $\R^d$  equipped with the Euclidean norm.
We define the coherence parameter of the dictionary $\D$ as follows
$$
M(\D) := \sup_{k\neq l} |\<g^k,g^l\>|.
$$
    
In this section we discuss the following   characteristics
$$
N(d,\mu) := \sup\{N:\exists \D \quad\text{such that} \quad\# \D \ge N, M(\D)\le\mu\}.
$$

The problem of studying $N(d,\mu)$ is equivalent to a fundamental problem of information theory. It is a problem on optimal spherical codes. A spherical code ${\mathcal S}(d,N,\mu)$ is a set of $N$ points (code words) on the $d$-dimensional unit sphere, such that the absolute values of inner products between any two distinct code words is not greater than $\mu$. The problem is to find the largest $N^*$ such that the spherical code  ${\mathcal S}(d,N^*,\mu)$ exists. It is clear that $N^*=N(d,\mu)$. Denote by $\D(\mu)$ a dictionary such that $M(\D(\mu))\le \mu$ and $|\D(\mu)|=N(d,\mu)$. We call such $\D(\mu)$ an {\it extremal dictionary} for a given $\mu$. 
\begin{Theorem}\label{T4.1} Let $\D(\mu):=\{g^k\}_{k=1}^{N(d,\mu)}$ be an extremal dictionary for a given $\mu\le (1/2)^{1/2}$. Then 
$$
B_2\subset (\cup_{j=1}^{N(d,\mu)}B^o_2(\mu g^j,r))\cup (\cup_{j=1}^{N(d,\mu)}B^o_2(-\mu g^j,r)),\quad r^2=1-\mu^2.
$$
Thus, $N_r(B_2)\le 2N(d,\mu)$.
\end{Theorem}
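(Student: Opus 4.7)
The plan is to reduce the covering statement to a purely Euclidean inner-product estimate and then extract that estimate from the extremality of $\D(\mu)$.

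First, for any $x \in B_2$ and any $j$, I would expand
$$
\|x \mp \mu g^j\|_2^2 = \|x\|_2^2 + \mu^2 \mp 2\mu\langle x, g^j\rangle.
$$
Since I am free to use either $\mu g^j$ or $-\mu g^j$, the task is to find $j$ for which $|\langle x, g^j\rangle|$ is suitably large. Concretely, if I can guarantee $|\langle x, g^j\rangle| > \mu \|x\|_2$ (with appropriate sign choice), then
$$
\|x - \mu g^j\|_2^2 < \|x\|_2^2 + \mu^2 - 2\mu^2\|x\|_2,
$$
and I want the right hand side to be $\le 1-\mu^2 = r^2$. The inequality $\|x\|_2^2 - 2\mu^2\|x\|_2 + 2\mu^2 - 1 \le 0$ is a quadratic in $t = \|x\|_2$ with roots $1$ and $2\mu^2-1$; under the hypothesis $\mu \le 1/\sqrt 2$ both roots are at most $1$, so the inequality holds on $t \in [0,1]$. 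Combined with the strict inner-product inequality, this yields $x \in B^o_2(\mu g^j, r)$ or $x \in B^o_2(-\mu g^j, r)$.

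Second, for the inner-product lower bound, I would set $u := x/\|x\|_2$ (assuming $x \ne 0$; the case $x=0$ is immediate since $\mu < 1/\sqrt{2}$) and argue by contradiction using extremality. Suppose $|\langle u, g^j\rangle| \le \mu$ for every $j$. If $u \in \D(\mu)$, then $|\langle u, u\rangle| = 1 > \mu$, a contradiction. Otherwise $\D(\mu) \cup \{u\}$ is a system of normalized vectors whose pairwise inner products are bounded in absolute value by $\mu$, so it has coherence at most $\mu$ and cardinality $N(d,\mu)+1$, contradicting the definition of $N(d,\mu)$. Hence some $j$ satisfies $|\langle u, g^j\rangle| > \mu$, i.e.\ $|\langle x, g^j\rangle| > \mu \|x\|_2$; replacing $g^j$ by $-g^j$ if necessary makes this positive, and we are in the situation of the first paragraph.

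The main obstacle I anticipate is purely technical: making sure the strict inequality survives to cover the boundary case $\|x\|_2 = 1$, where the deterministic quadratic bound $\|x\|_2^2 - 2\mu^2\|x\|_2 + 2\mu^2 - 1$ hits the value $0$. This is handled precisely because the extremality step delivers a \emph{strict} inequality $|\langle u, g^j\rangle| > \mu$, which cascades into $\|x \mp \mu g^j\|_2^2 < r^2$ even at the unit sphere. Apart from that, the proof is simply a packaging of the two observations: (i) a projection onto any direction $g^j$ with coefficient exceeding $\mu$ pushes $x$ into the corresponding ball of radius $\sqrt{1-\mu^2}$, and (ii) no unit vector can be $\mu$-incoherent with every element of an extremal dictionary, since adding it would violate maximality of $N(d,\mu)$.
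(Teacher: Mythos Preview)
Your argument is correct and follows exactly the same line as the paper's proof: use extremality of $\D(\mu)$ to find, for the normalized vector $x/\|x\|_2$, some $g^k$ with $|\langle x/\|x\|_2, g^k\rangle| > \mu$, then expand $\|x \mp \mu g^k\|_2^2$ and bound it by the quadratic $\|x\|_2^2 - 2\mu^2\|x\|_2 + \mu^2 \le 1 - \mu^2$ for $\|x\|_2\le 1$, $\mu\le 1/\sqrt2$. The paper is terser (it does not spell out the factorization of the quadratic or the edge cases $x=0$ and $u\in\D(\mu)$), but the substance is identical.
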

\begin{proof} Our assumption that $\D(\mu)$ is an extremal dictionary for $\mu$ implies that for any $x\in B_2$ there is 
$g^k\in \D(\mu)$ such that $|\<x/\|x\|_2,g^k\>| > \mu$. Suppose, $\<x/\|x\|_2,g^k\> > \mu$. The other case $\<x/\|x\|_2,-g^k\> > \mu$ is treated exactly the same way. Then
$$
\|x-\mu g^k\|_2^2 = \|x\|_2^2 + \mu^2 -2\mu\<x,g^k\> < \|x\|^2_2 +\mu^2 -2\mu^2\|x\|_2 \le 1-\mu^2.
$$
\end{proof}

The problem of estimating $N(d,\mu)$ is well studied (see, for instance, \cite{Tbook}, section 5.7, p. 314). It is known (see \cite{Tbook}, p. 315) that for a system $\D$ with $\#\D\ge 2n$ we have $M(\D)\ge (2n)^{-1/2}$. Thus, a natural range for $\mu$ is $[(2n)^{-1/2},1]$. In particular,
the following bound is known (see \cite{Tbook}, p. 315)
\begin{equation}\label{4.1}
N(d,\mu)\le \exp(C_1d\mu^2\ln(2/\mu)), \quad \mu \in [(2n)^{-1/2},1/2]. 
\end{equation}
As a corollary of (\ref{4.1}) and Theorem \ref{T4.1} we obtain the following statement.
\begin{Corollary}\label{C4.1} For $r=(1-\mu^2)^{1/2}$, $\mu\in [(2n)^{-1/2},1/2]$, we have
$$
N_r(B_2) \le 2\exp(C_1d\mu^2\ln(2/\mu)).
$$
\end{Corollary}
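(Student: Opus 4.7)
The plan is to derive this corollary as a direct combination of the two ingredients already on the table: Theorem \ref{T4.1}, which converts an extremal incoherent dictionary into a covering of $B_2$, and the upper bound (\ref{4.1}) on the size $N(d,\mu)$ of such an extremal dictionary. Neither step requires new geometric input; the work is essentially bookkeeping on the allowed range of $\mu$.

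First I would fix $\mu \in [(2n)^{-1/2}, 1/2]$ and observe that this range is contained in the hypothesis range $\mu \le (1/2)^{1/2}$ of Theorem \ref{T4.1}, since $1/2 < (1/2)^{1/2}$. Then I would let $\D(\mu) = \{g^k\}_{k=1}^{N(d,\mu)}$ be an extremal dictionary and invoke Theorem \ref{T4.1} to conclude
$$
B_2 \subset \Bigl(\bigcup_{j=1}^{N(d,\mu)} B_2^o(\mu g^j, r)\Bigr) \cup \Bigl(\bigcup_{j=1}^{N(d,\mu)} B_2^o(-\mu g^j, r)\Bigr)
$$
with $r = (1-\mu^2)^{1/2}$, giving the covering number bound $N_r(B_2) \le 2 N(d,\mu)$.

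Second, I would apply the known estimate (\ref{4.1}), namely $N(d,\mu) \le \exp(C_1 d \mu^2 \ln(2/\mu))$, which is valid on exactly the same range $\mu \in [(2n)^{-1/2}, 1/2]$. Substituting yields the desired inequality $N_r(B_2) \le 2 \exp(C_1 d \mu^2 \ln(2/\mu))$.

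There is essentially no obstacle: the only point requiring a moment of care is to confirm compatibility of the two ranges so that both Theorem \ref{T4.1} and the dictionary bound (\ref{4.1}) apply to the same dictionary simultaneously. Since the proof is a one-line composition, I would present it as such and not expand further.
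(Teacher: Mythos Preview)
Your proposal is correct and matches the paper's approach exactly: the paper simply states the corollary as an immediate consequence of Theorem~\ref{T4.1} and the bound~(\ref{4.1}), without further argument. Your additional remark verifying that the range $\mu\in[(2n)^{-1/2},1/2]$ sits inside the hypothesis $\mu\le(1/2)^{1/2}$ of Theorem~\ref{T4.1} is a reasonable bit of bookkeeping that the paper leaves implicit.
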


\section{Covering in Banach spaces using incoherent dictionaries}

 We use here a generalization of the concept of $M$-coherent dictionary to the case of Banach spaces. This generalization was published in \cite{T23} (see also \cite{Tbook}, p. 381).

Let $\D$ be a dictionary in a Banach space $X$. We define the coherence parameter of this dictionary in the following way
$$
M(\D):=M(\D,X):= \sup_{g\neq h;g,h\in\D}\sup_{F_g}|F_g(h)|,
$$
where $F_g$ is a norming functional for $g$. We note that, in general, a norming functional $F_g$ is not unique. This is why we take $\sup_{F_g}$ over all norming functionals of $g$ in the definition of $M(\D)$. We do not need $\sup_{F_g}$ in the definition of $M(\D)$ if for each 
$g\in\D$ there is a unique norming functional $F_g\in X^*$. Then we define $\D^*:=\{F_g,g\in\D\}$ and call $\D^*$ a {\it dual dictionary} to a dictionary $\D$. 
 It is known that the uniqueness of the norming functional $F_g$ is equivalent to the property that $g$ is a point of Gateaux smoothness:
 $$
 \lim_{u\to 0}(\|g+uy\|+\|g-uy\|-2\|g\|)/u =0
 $$
 for any $y\in X$. In particular, if $X$ is uniformly smooth then $F_f$ is unique for any $f\neq 0$. 

Let $\D:=\{g^j\}_{j=1}^N$ be a normalized system of vectors in $X$, which is $\R^d$ equipped with a norm $\|\cdot\|$, 
$g^j=(g^j_1,\dots,g^j_d)^T$. Denote by
$$
\Phi:=[g^1,\dots,g^N]
$$
a $d\times N$ matrix formed by column vectors $\{g^j\}$. Suppose for simplicity that for each $g^j$ there is a unique norming functional $F_{g^j}\in X^*$. Each functional $F_{g^j}\in X^*$ can be associated with a vector $w^j\in \R^d$ in such a way that $w_i^j=F_{g^j}(e^i)$, $i=1,\dots,d$. Then 
$$
F_{g^j}(g^k) = \sum_{i=1}^d g^k_iF_{g^j}(e^i)=\sum_{i=1}^d g^k_iw^j_i = \<w^j,g^k\>.
$$
Consider the matrix
$$
W:=[w^1,\dots,w^N] 
$$
which is a $d\times N$ matrix formed by column vectors $\{w^j\}$. Consider the transposed matrix $W^T$ that is formed by the row vectors $(w^j_1,\dots,w^j_d)$, $j=1,\dots,N$, or by the column vectors $h_i:=(w^1_i,\dots,w^N_i)^T$, $i=1,\dots, d$.
Define the {\it coherence matrix} of a dictionary $\D$ 
as follows
$$
C(\D):=W^T\Phi.
$$
 Then the coherence matrix $C(\D)$ of the system $\D=\{g^j\}_{j=1}^N$ satisfies the following inequality for the rank: $\rank C(\D) \le d$.
 Indeed, the columns of $C(\D)$ are linear combinations of $d$ columns $h_i$, $i=1,\dots, d$. 
 It is clear that the coherence matrix $C(\D)=||c_{i,j}||_{i=1,j=1}^{N}$, $c_{i,j}=F_{g^i}(g^j)$, has $1$ on the diagonal and for all off-diagonal elements we have $|c_{i,j}|\le M(\D)$. 
 
 In this section we discuss the following   characteristics
$$
N(d,\mu,X) := \sup\{N:\exists \D \quad\text{such that} \quad\# \D \ge N, M(\D,X)\le\mu\}.
$$

We now use a fundamental result of Alon (see, for instance, \cite{Tbook}, p.317) to derive an upper bound for $N(n,\mu,X)$ from the property $\rank C(\D)\le d$. 
 
 \begin{Theorem}\label{T5.1}   Let $A:=\|a_{i,j}\|_{i,j=1}^N$ be a square matrix of the form $a_{i,i}=1$, $i=1,\dots,N$; $|a_{i,j}|\le\e<1/2$, $i\neq j$. Then
 \begin{equation}\label{5.1}
 \min(N,(\ln N)(\e^2\ln(2/\e))^{-1}) \le C_2\rank A  
 \end{equation}
 with an absolute constant $C_2$.
 \end{Theorem}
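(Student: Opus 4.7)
The plan is to apply Alon's polynomial method. Let $r:=\rank A$ and factor $A=UV^T$ so that $a_{i,j}=\<u_i,v_j\>$ with $u_i,v_j\in\R^r$. Fix a univariate polynomial $P$ of degree $k$ with $P(1)=1$ and $|P(t)|\le\de$ on $[-\e,\e]$, and form the matrix $B$ with entries $b_{i,j}:=P(a_{i,j})$. The whole argument consists in sandwiching $\rank B$ between two competing estimates; a judicious choice of $P$ will then force $r$ to be large.

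For the upper bound, expand $b_{i,j}=\sum_{\ell=0}^{k}c_\ell\<u_i,v_j\>^\ell=\<\tilde u_i,\tilde v_j\>$, where the lifted vectors live in $\bigoplus_{\ell=0}^{k}\mathrm{Sym}^\ell(\R^r)$, of total dimension $\binom{r+k}{k}$ by the hockey-stick identity; hence $\rank B\le\binom{r+k}{k}$. For the lower bound, $b_{i,i}=P(1)=1$ gives $\mathrm{tr}(B)=N$, while $|b_{i,j}|\le\de$ off the diagonal yields $\|B\|_F^2\le N+N^2\de^2$. Writing $B$ via its SVD and applying Cauchy--Schwarz to the singular values produces
\[
 N\;=\;\mathrm{tr}(B)\;\le\;\sqrt{\rank B}\;\|B\|_F,
\]
so $\rank B\ge N^2/\|B\|_F^2\ge N/2$ as soon as $N\de^2\le 1$.

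To choose $P$ efficiently I would take the rescaled Chebyshev polynomial $P(x):=T_k(x/\e)/T_k(1/\e)$. Using $T_k(\cosh\phi)=\cosh(k\phi)$ with $\cosh\phi=1/\e$ and the hypothesis $\e<1/2$, one gets $1/T_k(1/\e)\le 2(\e/2)^k$ on $[-\e,\e]$, so the requirement $\de\le N^{-1/2}$ forces only $k\asymp\ln N/\ln(2/\e)$. Combining this with the upper bound through $\binom{r+k}{k}\le (e(r+k)/k)^k$ and taking logarithms gives, in the nondegenerate regime $r\ge k$,
\[
 r\;\gtrsim\;k\,(N/2)^{1/k}\;\asymp\;\frac{\ln N}{\e^2\ln(2/\e)},
\]
which is exactly (\ref{5.1}). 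The $\min$ with $N$ on the left absorbs the regime where $k>r$ or where $\ln N$ is too small for the polynomial argument to improve on the trivial $r\le N$.

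The main obstacle is calibrating $k$ and $\de$ so that both ends of the sandwich are simultaneously tight. The sharpness of the Chebyshev tail estimate $1/T_k(1/\e)\asymp(\e/2)^k$ for $\e<1/2$ is exactly what produces the factor $\e^2\ln(2/\e)$ in the final answer; pushing $\e$ up toward $1/2$ degrades the estimate, which is why the hypothesis $\e<1/2$ enters. Once this balance is set, the remaining ingredients---the trace/Frobenius Cauchy--Schwarz step, the symmetric-tensor dimension count, and the bound $\binom{r+k}{k}\le(e(r+k)/k)^k$---are routine.
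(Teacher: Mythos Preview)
The paper does not prove Theorem~\ref{T5.1} at all: it is stated as ``a fundamental result of Alon'' with a reference to \cite{Tbook}, p.~317, and is then used as a black box to bound $N(d,\mu,X)$. So there is no proof in the paper to compare against.

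Your proposal is the standard polynomial-method proof of Alon's theorem, and the outline is correct. The three pillars---the rank bound $\rank B\le\binom{r+k}{k}$ via the tensor/monomial lift, the trace--Frobenius inequality $|\mathrm{tr}\,B|\le\sqrt{\rank B}\,\|B\|_F$ (valid for arbitrary real matrices via the SVD), and the Chebyshev choice $P(x)=T_k(x/\e)/T_k(1/\e)$---fit together exactly as you describe. One small clarification worth making explicit: with $k\asymp\tfrac{\ln N}{2\ln(2/\e)}$ chosen so that $\de\le N^{-1/2}$, the quantity $(N/2)^{1/k}$ is of order $(2/\e)^2$, not $2/\e$; this is precisely what produces the $\e^2$ in the denominator rather than $\e$. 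Your final line $r\gtrsim k\,(N/2)^{1/k}\asymp \ln N/(\e^2\ln(2/\e))$ is correct once this is noted, but the exponent $2$ deserves a word since it is the whole point of the bound. The treatment of the $\min$ with $N$ is also a bit casual: in the regime where the second term exceeds $N$ one should argue separately (e.g.\ via diagonal dominance or by taking $k$ small), but this is routine.
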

 
 We apply this theorem with $A=C(\D)$ and $\e=\mu$. For \newline $N\le (\ln N)(\mu^2\ln(2/\mu))^{-1}$ (\ref{5.1}) implies that
 $$
 N\le C_2d.
 $$
 For $N\ge (\ln N)(\mu^2\ln(2/\mu))^{-1}$ (\ref{5.1}) implies that
 $$
 (\ln N)(\mu^2\ln(2/\mu))^{-1} \le C_2d
 $$
 and
 \begin{equation}\label{5.2}
 N\le \exp(C_2d\mu^2\ln(2/\mu)).  
 \end{equation}
 Thus,
 $$
 N\le \max(C_2d,\exp(C_2d\mu^2\ln(2/\mu))).
 $$
 We formulate the above result as a theorem.
 \begin{Theorem}\label{T5.2} For a Banach space $X$ which is $\R^d$ equipped with a norm $\|\cdot\|$ we have
 \begin{equation}\label{5.3}
 N(d,\mu,X)\le \max(C_2d,\exp(C_2d\mu^2\ln(2/\mu))).
 \end{equation}
 \end{Theorem}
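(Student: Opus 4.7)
The plan is to reduce the theorem directly to Alon's rank bound (Theorem \ref{T5.1}) applied to the coherence matrix $C(\D)$ constructed earlier in this section. First I would fix $\mu\in(0,1/2)$ and let $\D=\{g^j\}_{j=1}^N$ be any normalized dictionary in $X$ with $M(\D,X)\le\mu$; the goal is to bound $N$ from above by $\max(C_2d,\exp(C_2d\mu^2\ln(2/\mu)))$, and then take the supremum over such $\D$ to get the corresponding bound on $N(d,\mu,X)$.

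Next I would form the coherence matrix $C(\D)=W^T\Phi$ as described before the theorem. The two structural properties needed are already in place: the diagonal entries are $F_{g^i}(g^i)=\|g^i\|=1$, the off-diagonal entries $c_{i,j}=F_{g^i}(g^j)$ satisfy $|c_{i,j}|\le M(\D)\le\mu$, and, crucially, $\rank C(\D)\le d$ because every column of $C(\D)$ is a linear combination of the $d$ column vectors $h_1,\ldots,h_d\in\R^N$ obtained from the rows of $W^T$. If uniqueness of the norming functional fails for some $g^j$, I would simply select one norming functional per vector; this does not affect the bound on $|c_{i,j}|$ nor the rank bound.

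Now I would invoke Theorem \ref{T5.1} with $A:=C(\D)$ and $\varepsilon:=\mu<1/2$, which gives
$$
\min\bigl(N,(\ln N)(\mu^2\ln(2/\mu))^{-1}\bigr)\le C_2\rank C(\D)\le C_2 d.
$$
Splitting into the two cases where the minimum is attained yields either $N\le C_2 d$ directly, or $(\ln N)(\mu^2\ln(2/\mu))^{-1}\le C_2 d$, which rearranges to $N\le\exp(C_2 d\mu^2\ln(2/\mu))$. In either case $N\le\max(C_2 d,\exp(C_2 d\mu^2\ln(2/\mu)))$, and taking the supremum over admissible $\D$ gives (\ref{5.3}).

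The only delicate step is really the verification that $\rank C(\D)\le d$ in the Banach-space setting, since unlike the Hilbert case $W\ne\Phi$; but this is precisely the content of the matrix construction preceding the theorem, so no new work is needed. The rest is a direct application of Alon's theorem and a case split.
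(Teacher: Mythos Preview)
Your proposal is correct and follows essentially the same approach as the paper: form the coherence matrix $C(\D)=W^T\Phi$, use its structural properties (unit diagonal, off-diagonal entries bounded by $\mu$, $\rank C(\D)\le d$), apply Alon's rank bound (Theorem~\ref{T5.1}) with $A=C(\D)$ and $\varepsilon=\mu$, and then split into the two cases of the minimum to obtain~(\ref{5.3}). Your remark about selecting one norming functional when uniqueness fails is a harmless addition to what the paper assumes for simplicity.
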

 
In particular, in the case $\mu = C_3d^{-1/2}$,  inequality (\ref{5.3}) gives the polynomial bound $N(d,\mu,X)\le d^{C_4}$. 

Let $X$ be a uniformly smooth Banach space with modulus of smoothness $\rho(u)$. Denote by 
$a(\mu)$ a solution (actually, it is a unique solution) to the equation
$$
a\mu = 4\rho(2a)
$$
if it exists and set $a(\mu):=1$ otherwise. Then we always have $4\rho(2a(\mu))\le a(\mu)\mu$. 
Denote by $\D(\mu,X)$ a dictionary such that $M(\D(\mu),X)\le \mu$ and $|\D(\mu,X)|=N(d,\mu,X)$. We call such $\D(\mu,X)$ an {\it extremal dictionary} for a given $\mu$ in the space $X$. 
\begin{Theorem}\label{T5.3} Let $\D(\mu,X):=\{g^k\}_{k=1}^{N(d,\mu,X)}$ be an extremal dictionary for a given $\mu$ in the space $X$. Then 
$$
B_X\subset (\cup_{j=1}^{N(d,\mu,X)}B^o_X(a(\mu) g^j,r))\cup (\cup_{j=1}^{N(d,\mu,X)}B^o_X(-a(\mu) g^j,r)),\quad r=1-\frac{1}{2}\mu a(\mu).
$$
Thus, $N_r(B)\le 2N(d,\mu,X)$ with $r=1-\frac{1}{2}\mu a(\mu)$.
\end{Theorem}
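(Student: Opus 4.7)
The plan is to mirror the Hilbert-space proof of Theorem \ref{T4.1}, replacing the identity $\|x-\mu g^k\|_2^2 = \|x\|_2^2 + \mu^2 - 2\mu\<x,g^k\>$ by the uniform-smoothness bound of Lemma \ref{L3.1}. The quantity $a(\mu)$ is chosen precisely so that the first-order linear gain $\mu\, a(\mu)$ beats twice the smoothness error $2\rho(2a(\mu))$ with a factor $1/2$ to spare, yielding the radius $r = 1-\tfrac12\mu a(\mu)$.

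Fix $x\in B_X$. If $\|x\|$ is sufficiently small, the triangle inequality $\|x\pm a(\mu)g^j\|\le\|x\|+a(\mu)$ already gives a value below $r$, so the substantive case is $\|x\|\in[1/2,1]$. Let $F_x$ denote the (unique, by uniform smoothness of $X$) norming functional of $x$; the same $F_x$ also norms $x/\|x\|$. Extremality of $\D(\mu,X)$ means that $\D(\mu,X)\cup\{x/\|x\|\}$ has $N(d,\mu,X)+1$ elements, which exceeds the supremum defining $N(d,\mu,X)$, and hence its coherence must strictly exceed $\mu$. This forces the existence of $g^k\in\D(\mu,X)$ and a sign $\epsilon\in\{-1,+1\}$ with $F_x(\epsilon g^k)>\mu$.

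Apply Lemma \ref{L3.1} with $y:=-\epsilon g^k$ and $u:=a(\mu)$:
\begin{equation*}
\|x-\epsilon a(\mu)g^k\|\;\le\;\|x\|-a(\mu)F_x(\epsilon g^k)+2\|x\|\,\rho\bigl(a(\mu)/\|x\|\bigr).
\end{equation*}
Using the strict inequality $F_x(\epsilon g^k)>\mu$, the bounds $\|x\|\le 1$ and $a(\mu)/\|x\|\le 2a(\mu)$ with monotonicity of $\rho$, and the defining relation $4\rho(2a(\mu))\le\mu a(\mu)$, we obtain
\begin{equation*}
\|x-\epsilon a(\mu)g^k\|\;<\;1-\mu a(\mu)+2\rho(2a(\mu))\;\le\;1-\mu a(\mu)+\tfrac{1}{2}\mu a(\mu)\;=\;r.
\end{equation*}
Thus $x\in B_X^o(\epsilon a(\mu)g^k,r)$, and counting the $2N(d,\mu,X)$ centers of the form $\pm a(\mu)g^j$ delivers the covering bound.

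The main obstacle is reading off the correct inequality from extremality. The coherence $M(\D,X)$ is defined asymmetrically in terms of $F_g(h)$, so augmenting $\D$ by $f:=x/\|x\|$ could in principle violate the $\mu$-bound either by $|F_f(g^k)|>\mu$ (which is what Lemma \ref{L3.1} requires) or by the dual-direction alternative $|F_{g^k}(f)|>\mu$; in the Hilbert setting of Theorem \ref{T4.1} these coincide by symmetry of the inner product and the issue is invisible. In the Banach case one resolves this by selecting $\D(\mu,X)$ to be maximal in the symmetric sense $\max(|F_g(h)|,|F_h(g)|)\le\mu$, or, equivalently, by producing a parallel Lemma \ref{L3.1}-type estimate for the rescaled quantity $a(\mu)\|g^k-x/a(\mu)\|$ in the dual alternative; the conclusion is the same in either case.
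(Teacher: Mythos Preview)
Your argument tracks the paper's proof essentially line for line: reduce to $\|x\|\ge 1/2$, use extremality of $\D(\mu,X)$ to locate $g^k$ with $|F_x(g^k)|>\mu$, then apply Lemma~\ref{L3.1} together with the defining inequality $4\rho(2a(\mu))\le a(\mu)\mu$ to obtain $\|x-\epsilon a(\mu)g^k\|<r$. The paper likewise asserts ``there is $g^k\in\D(\mu,X)$ such that $|F_x(g^k)|>\mu$'' directly and proceeds the same way; your treatment of the two signs simultaneously via $\epsilon$ is only a cosmetic difference.

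Your closing paragraph is a genuine observation that the paper does not make. Because $M(\D,X)=\sup_{g\ne h}\sup_{F_g}|F_g(h)|$ is asymmetric in $g,h$, adjoining $f=x/\|x\|$ to $\D(\mu,X)$ might in principle only force $|F_{g^k}(f)|>\mu$ for some $k$, rather than the inequality $|F_f(g^k)|>\mu$ that Lemma~\ref{L3.1} actually consumes; the paper glosses over this. Of your two suggested repairs, the first---declaring $\D(\mu,X)$ extremal for the symmetrized coherence $\sup_{g\ne h}\max(|F_g(h)|,|F_h(g)|)$---is sound and leaves Corollary~\ref{C5.1} intact, since Theorem~\ref{T5.2} uses only the one-sided off-diagonal bound on $C(\D)$ and hence still applies. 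Your second repair, a ``parallel Lemma~\ref{L3.1}-type estimate'' for $a(\mu)\|g^k-x/a(\mu)\|$, does not go through as written: expanding around $g^k$ produces an error term $2a(\mu)\rho(\|x\|/a(\mu))$ with argument of order $1/a(\mu)$, which is large and not controlled by $\mu a(\mu)$. Keep the first fix and drop the second.
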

\begin{proof} Our assumption that $\D(\mu,X)$ is an extremal dictionary for $\mu$ in the space $X$ implies that for any $x\in B_X$ there is 
$g^k\in \D(\mu,X)$ such that $|F_x(g^k)| > \mu$. Suppose, $F_x(g^k) > \mu$. The other case $F_x(-g^k) > \mu$ is treated exactly the same way. Without loss of generality we assume that $\|x\|\ge 1/2$. Then by Lemma \ref{L3.1} we get
$$
\|x-a(\mu)g^k\| \le \|x\| -a(\mu) F_x(g^k)+2\|x\|\rho(2a(\mu))
$$
$$
 < 1-\mu a(\mu) +2\rho(2a(\mu)) \le 1- \frac{1}{2}\mu a(\mu).
$$
\end{proof}

As a corollary of Theorem \ref{T5.2} and Theorem \ref{T5.3} we obtain the following statement.
\begin{Corollary}\label{C5.1} For $r=1- \frac{1}{2}\mu a(\mu)$, $\mu\le 1/2$, we have
$$
N_r(B_X) \le 2\max(C_2d,\exp(C_2d\mu^2\ln(2/\mu))).
$$
\end{Corollary}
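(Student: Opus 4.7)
The plan is to obtain Corollary \ref{C5.1} simply by chaining Theorem \ref{T5.3} with the combinatorial bound in Theorem \ref{T5.2}. Both ingredients are already in place, so no new construction is required; the corollary is a direct transcription once the hypotheses are matched.

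First I would invoke Theorem \ref{T5.3} with the given $\mu \le 1/2$: taking an extremal dictionary $\D(\mu,X) = \{g^k\}_{k=1}^{N(d,\mu,X)}$, the theorem supplies an explicit covering of $B_X$ by the $2 N(d,\mu,X)$ open balls $B^o_X(\pm a(\mu) g^j, r)$ at radius $r = 1 - \tfrac{1}{2}\mu a(\mu)$. This immediately gives $N_r(B_X) \le 2 N(d,\mu,X)$, which is the content of the corollary up to the substitution of a quantitative bound on $N(d,\mu,X)$.

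Next I would apply Theorem \ref{T5.2}, which states $N(d,\mu,X) \le \max(C_2 d, \exp(C_2 d \mu^2 \ln(2/\mu)))$. The hypothesis $\mu \le 1/2$ in the corollary exactly matches the regime in which Theorem \ref{T5.2} is valid (inherited from the $\e < 1/2$ condition in the underlying Alon-type rank inequality, Theorem \ref{T5.1}). Substituting this bound for $N(d,\mu,X)$ into the estimate from the previous paragraph yields
\[
N_r(B_X) \;\le\; 2\max\bigl(C_2 d,\,\exp(C_2 d \mu^2 \ln(2/\mu))\bigr),
\]
which is precisely the claimed inequality.

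There is essentially no obstacle here beyond checking hypothesis compatibility: Theorem \ref{T5.3} requires only that $\D(\mu,X)$ be an extremal dictionary and that $a(\mu)$ be defined via $a\mu = 4\rho(2a)$ (or set to $1$ if no solution exists), both of which are available under the standing assumption on $X$; and Theorem \ref{T5.2} requires $\mu < 1/2$, matched by the corollary's hypothesis $\mu \le 1/2$ (with the boundary case $\mu = 1/2$ handled by continuity, or by the trivial covering). Hence the corollary follows with no further work.
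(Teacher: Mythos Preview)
Your proposal is correct and matches the paper's approach exactly: the paper states the corollary immediately after Theorem~\ref{T5.3} with the remark that it follows as a corollary of Theorems~\ref{T5.2} and~\ref{T5.3}, which is precisely the chaining you carry out.
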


\begin{Remark}\label{R5.1} Let $\oo(u)$ be a continuous majorant of $\rho(u)$, $u\in[0,\infty)$, such that $\oo(u)/u$ monotone decreases  to $0$ as $u\to 0$. Define $a(\oo,\mu)$ as a solution to the equation
\begin{equation}\label{5.4}
a\mu = 4\oo(2a)
\end{equation}
if it exists and set $a(\oo,\mu)=1$ otherwise. 

Theorem \ref{T5.3} and Corollary \ref{C5.1} hold with $a(\mu)$ replaced by $a(\oo,\mu)$.
\end{Remark}

\section{Some examples}

In this section we discuss the above results demonstrating their power on some specific examples. 

{\bf Example 1.} Assume that $X$, being a uniformly smooth Banach space $\R^d$ with norm $\|\cdot\|$, has modulus of smoothness of power type:
$\rho(u)\le \gamma u^q$, $q\in (1,2]$. Setting $\oo(u):=\gamma u^q$ we find
$$
a(\oo,\mu) = \left(\frac{\mu}{\gamma 2^{q+2}}\right)^{\frac{1}{q-1}}.
$$
By Remark \ref{R5.1} and Corollary \ref{C5.1} we get for $r=1- \frac{1}{2}\mu a(\oo,\mu)$, $\mu\le 1/2$, 
$$
N_r(B_X) \le 2\max(C_2d,\exp(C_2d\mu^2\ln(2/\mu))).
$$
In other words, denoting $\delta:=1-r$ and $q':=\frac{q}{q-1}$ we obtain
\begin{equation}\label{6.1}
N_{1-\delta}(B_X) \le 2\max(C_2d,\exp(C_2(q,\gamma)d\delta^{2/q'}\ln(2/\delta))).
\end{equation}
In particular, if $q=2$ and $\delta=\frac{A_1}{d}$ we get a polynomial bound 
$$
N_{1-\frac{A_1}{d}}(B_X) \le A_2d^{A_3}.
$$
In case $q\in (1,2]$ we get a polynomial bound for $N_{1-\delta}(B_X)$ for
$\delta \asymp d^{-q'/2}$. 

{\bf Example 2.} Let $X:=\ell^d_p$, $p\in (1,\infty)$. Then it is known that 
\begin{equation}\label{6.2}
\rho(u) \le u^p/p \quad \text{if} \quad 1\le p\le 2,
\end{equation}
\begin{equation}\label{6.3}
\rho(u) \le \frac{p-1}{2}u^2 \quad \text{if} \quad 2\le p<\infty.
\end{equation}
We begin with the case $2\le p<\infty$. Specify $\oo(u):=\frac{p}{2}u^2$.
Then
$$
a(\oo,\mu) = \frac{\mu}{8p},
$$
$$
\delta := \frac{1}{2}\mu a(\oo,\mu) = \frac{\mu^2}{16p},
$$
$$
\mu =  4p^{1/2}\delta^{1/2}.
$$
Thus, by Remark \ref{R5.1} and Corollary \ref{C5.1} we get for $r=1- \frac{1}{2}\mu a(\oo,\mu)$, $\mu\le 1/2$,
\begin{equation}\label{6.4}
N_{1-\delta}(B_p^d) \le 2\max(C_2d,\exp(8C_2dp\delta \ln\frac{1}{4p\delta})).
\end{equation}
This implies that we obtain a polynomial bound for $N_{1-\delta}(B^d_p)$ in case $2\le p<\infty$ for $\delta \asymp \frac{1}{pd}$. 

In the case $p\in (1,2)$ we set $\oo(u) :=\frac{u^p}{p}$ and get
$$
a(\oo,\mu) = \left(\frac{p\mu}{2^{p+2}}\right)^{\frac{1}{p-1}},
$$
$$
\delta := \frac{1}{2}\mu a(\oo,\mu) = C(p) \mu^{p'},\qquad \mu \asymp \delta^{\frac{1}{p'}}.
$$
As above by Remark \ref{R5.1} and Corollary \ref{C5.1} we get for $p\in(1,2)$
\begin{equation}\label{6.5}
N_{1-\delta}(B_p^d) \le 2\max(C_2d,\exp(C_2(p)d\delta^{\frac{2}{p'}}\ln\frac{2}{\delta})).
\end{equation}
We obtain a polynomial bound for $N_{1-\delta}(B^d_p)$ in case $1< p<2$ for $\delta \asymp (\frac{1}{d})^{p'/2}$.

{\bf Example 3.} Let $X$ be a $d$-dimensional subspace of $L_p$, $1<p<\infty$. Similar to Example 2 we have
\begin{equation}\label{6.6}
\rho(u) \le u^p/p \quad \text{if} \quad 1\le p\le 2,
\end{equation}
\begin{equation}\label{6.7}
\rho(u) \le \frac{p-1}{2}u^2 \quad \text{if} \quad 2\le p<\infty.
\end{equation}
Therefore, relations (\ref{6.4}) and (\ref{6.5}) hold in this case too.

{\bf Example 4.} Let $X:=\ell^d_\infty$. Proposition \ref{P1.1} guarantees that for any $r\in [1/2,1)$ we have an exponential bound
$$
N_r(B_X) \le C^d
$$
for all $d$-dimensional spaces $X$ independently of their smoothness. 
It is easy to see that
$$
N(d,\ell^d_\infty) =2^d
$$
and, therefore, for all $r\in[1/2,1)$ we have
$$
N_r(B^d_\infty) \ge 2^d.
$$
This example shows that smoothness assumptions are important for breaking the exponential behavior of $N_r(B_X)$. 

The left inequality in Proposition \ref{P1.1} implies that for any $d$-dimensional Banach spaces $X$ the covering numbers $N_{1-\delta}(B_X)$ may have polynomial growth in $d$ only if $\delta \ll \frac{\ln d}{d}$. Examples 1--3 show that our technique based on extremal $\mu$-coherent dictionaries allows us to build polynomial in $d$ coverings of $B_X$ with $r=1-\delta$, $\delta \gg \frac{1}{d}$, for smooth $X$. 

{\bf Example 5.} Let $X:=\ell^d_2$. Take a dictionary $\D:=\{\pm e^j\}_{j=1}^d$.
Set $a:=\frac{1}{4d^{1/2}}$. Consider the covering
$$
\left(\cup_{j=1}^d B_2(ae^j,r)\right)\cup\left(\cup_{j=1}^d B_2(-ae^j,r)\right).
$$
We prove that there exists $c>0$ such that the above union with $r\ge 1-\frac{c}{d}$ covers $B_2$. Indeed, for any $x$ such that $\|x\|_2\ge 1/2$ there is a coordinate value $x_k$ such that $|x_k|\ge \frac{1}{2d^{1/2}}$. Suppose $x_k\ge \frac{1}{2d^{1/2}}$. Then
$$
x_k^2 - (x_k-a)^2= 2x_ka-a^2 \ge \frac{3}{16d}.
$$
This implies that the above explicitly written  union of  $2d$ balls of radius $r\ge
1-\frac{c}{d}$ covers $B_2$. We can use this covering for building an explicit covering with smaller $r$. The idea is to iterate  $m$ times the above covering with $r=1-\frac{c}{d}$. Then the radius of the resulting covering is $r=(1-\frac{c}{d})^m$ and the total number of balls in the covering does not exceed
$(2d)^m$. Using the notation $\delta:=1-r$ we get for small $\delta$
$$
(2d)^m \le \exp(Cd\delta \ln (2d)). 
$$

{\bf Example 6.} Let $X$ be a uniformly smooth Banach space $\R^d$ with norm $\|\cdot\|$ and a basis 
$\Psi:=\{\psi^j\}_{j=1}^d$. Then for any $x$ we have a unique representation 
$$
x=\sum_{j=1}^d x_j\psi^j,\quad |x_j|\le K\|x\|,\quad j=1,\dots,d.
$$
Let $\oo(u)$ be a continuous majorant of $\rho(u)$, $u\in[0,\infty)$, such that $\oo(u)/u$ monotone decreases  to $0$ as $u\to 0$. Set $a:=a(\oo,\frac{1}{Kd})$ to be a solution to the equation  (\ref{5.4}) with $\mu:=\frac{1}{Kd}$. 
Take a dictionary $\D:=\{\pm \psi^j\}_{j=1}^d$
and consider the covering
$$
\left(\cup_{j=1}^d B_X(a\psi^j,r)\right)\cup\left(\cup_{j=1}^d B_X(-a\psi^j,r)\right).
$$
We prove that  the above union with $r\ge 1-\frac{1}{2} a\mu$ covers $B_X$. We have
$$
\|x\| = F_x(x) = \sum_{j=1}^d x_jF_x(\psi^j) \le K\|x\|\sum_{j=1}^d |F_x(\psi^j)|,
$$
which implies that for some $k\in[1,d]$ 
\begin{equation}\label{6.8}
|F_x(\psi^k)| \ge (Kd)^{-1} =: \mu.
\end{equation} 
Suppose $F_x(\psi^k) \ge (Kd)^{-1}$. Then by Lemma \ref{L3.1} we get
$$
\|x-a\psi^k\| \le \|x\| -a  F_x(\psi^k)+2\|x\|\rho(2a)
$$
$$
 < 1-a\mu   +2\rho(2a) \le 1- \frac{1}{2}a\mu.
$$
This implies that the above explicitly written  union of  $2d$ balls of radius $r\ge
1-\frac{1}{2}a\mu$ covers $B_X$. We can use this covering for building an explicit covering with smaller $r$. As in Example 5 we iterate  $m$ times the above covering with $r=1-\frac{1}{2}a\mu$. Then the radius of the resulting covering is $r=(1-\frac{1}{2}a\mu)^m$ and the total number of balls in the covering does not exceed
$(2d)^m$.

Examples 5 and 6 demonstrate how simple constructions of coverings with $r<1$ can be used for $\e$-coverings. Suppose we can construct a $r$-covering with polynomial bound $N_r(B_X)\le d^{A_4}$ with $r=1-\frac{A_5}{d}$. Then, assuming that $X$ has smoothness of order $u^2$, as in Example 5, iterating this covering $m$ times we get a $(1-\delta)$-covering with
$$
N_{1-\delta}(B_X)\le \exp(Cd\delta \ln d),\quad \delta\ge \frac{A_5}{d}.
$$
This bound compared with the optimal bound from Proposition \ref{P1.1} contains an extra $\ln d$ factor in the exponent. However, a construction of an extremal 
dictionary for some fixed $\mu_0\ge c_0>0$ will give 
$$
N_{r_0}(B_X) \le \exp(C(c_0)d),\quad  r_0=r_0(c_0)<1.
$$
Iterating this construction we obtain
$$
N_\e(B_X)\le \exp(Cd\ln(1/\e))
$$
which is optimal in the sense of order of the exponent.

\newpage

\end{document}